\documentclass[a4paper,11pt]{article}
\usepackage[T1]{fontenc}
\usepackage[utf8]{inputenc}
\usepackage{amsmath,amssymb,amsfonts,amsthm,mathtools}
\usepackage{mathrsfs}
\usepackage{bm}
\usepackage{graphicx}
\usepackage{subcaption}
\usepackage{float}
\usepackage{enumerate}
\usepackage{enumitem}
\usepackage{xcolor}
\usepackage{array}
\usepackage{booktabs}
\usepackage{url}
\usepackage[colorlinks=true,linkcolor=blue,citecolor=blue,urlcolor=blue]{hyperref}

\newenvironment{keywords}
{\par\vspace{.05in}\footnotesize\noindent\textbf{Key words. }\ignorespaces}
{\par\vspace{.1in}}

\newenvironment{AMS}
{\par\vspace{.05in}\footnotesize\noindent\textbf{AMS subject classifications. }\ignorespaces}
{\par\vspace{.1in}}

\numberwithin{equation}{section}
\allowdisplaybreaks[1]

\newtheorem{thm}{Theorem}[section]
\newtheorem{lem}[thm]{Lemma}
\newtheorem{cor}[thm]{Corollary}
\newtheorem{prop}[thm]{Proposition}
\newtheorem{rem}[thm]{Remark}
\newtheorem{dfn}[thm]{Definition}

\DeclareMathOperator{\supp}{supp}

\DeclareMathOperator{\Tr}{Tr}
\DeclareMathOperator*{\argmin}{arg\,min}
\DeclareMathOperator*{\argmax}{arg\,max}

\newcommand{\eps}{\varepsilon}
\newcommand{\RR}{\mathbb{R}}

\newcommand{\MM}{\mathcal{M}}
\newcommand{\GMM}{\mathrm{GMM}}
\newcommand{\Id}{\mathrm{Id}}
\newcommand{\OT}{\mathrm{OT}}
\newcommand{\POT}{\mathrm{POT}}
\newcommand{\GW}{\mathrm{GW}}
\newcommand{\MW}{\mathrm{MW}}
\newcommand{\MGW}{\mathrm{MGW}}

\newcommand{\dd}{\,\mathrm{d}}
\newcommand{\X}{\mathcal{X}}
\newcommand{\Y}{\mathcal{Y}}
\newcommand{\Z}{\mathcal{Z}}

\newcommand{\ones}{\mathbf{1}}
\newcommand{\St}{\mathrm{St}}

\title{\bf Entropic Partial Optimal Transport and Partial Gromov--Wasserstein Distance between Gaussian Mixtures}
\author{Toshiaki Yachimura \and Xiaocheng Zou}
\date{}

\begin{document}
\maketitle

\begin{abstract}
Optimal transport and Gromov--Wasserstein distances are useful tools for comparing probability measures and metric measure spaces, but their balanced formulations force all mass to be matched. This constraint is often too strong for data with outliers, missing parts, or only partial overlap. In this paper, we develop entropic partial optimal transport for Gaussian mixture models and define a partial mixture Gromov--Wasserstein distance. For the finite entropic partial optimal transport problem, we prove the existence and uniqueness of the minimizer and establish quantitative large-penalty estimates. Moreover, the resulting entropic partial component couplings induce continuous partial transport plans through Gaussian optimal maps. We analyze their large-penalty and subsequent zero-entropy limits and construct the associated displacement interpolations and barycentric projection maps. In addition, by identifying each Gaussian mixture with a finite metric measure space of Gaussian components, we establish the metric property and large-penalty limit of the partial mixture Gromov--Wasserstein distance. Finally, numerical experiments on synthetic Gaussian mixtures and point clouds illustrate the effects of the penalty and entropic regularization and the robustness of partial matching to outliers.
\end{abstract}

\begin{keywords} entropic optimal transport, partial optimal transport, Gaussian mixture models, mixture Wasserstein distance, Gromov--Wasserstein distance
\end{keywords}

\begin{AMS}
49Q22, 62H30, 68T09
\end{AMS}

\pagestyle{plain}
\thispagestyle{plain}

\section{Introduction}\label{sec:introduction}

Comparing and matching probability measures is a fundamental task in modern data analysis. It appears in many areas, including machine learning, image processing, statistics, and biological data analysis, where distributions are used to represent images, point clouds, empirical samples, or high-dimensional features \cite{peyre2019computational}. A classical mathematical formulation of this problem is the Monge--Kantorovich optimal transport (OT) problem. Let $(\X,\mu)$ and $(\Y,\nu)$ be Polish spaces equipped with Borel probability measures. Let $c:\X \times \Y \to \RR_{\geq 0}$ be a measurable cost function. The Monge--Kantorovich problem is
\begin{equation}\label{eq:intro-kantorovich}
  \OT(\mu,\nu) = \inf_{\gamma \in \Pi(\mu,\nu)} \int_{\X\times\Y} c(x,y)\dd\gamma(x,y),
\end{equation}
where $\Pi(\mu,\nu)$ is the set of probability measures on $\X\times\Y$ with marginals $\mu$ and $\nu$. The theory has deep connections with geometric analysis, and it provides a geometrically meaningful way to compare probability measures \cite{ambrosio2021lectures,figalli2021invitation,santambrogio2015optimal,villani2009optimal}.

Despite its theoretical and practical importance, solving
\eqref{eq:intro-kantorovich} can be computationally expensive, especially for large-scale discrete data or high-dimensional empirical distributions. In finite-dimensional applications, a standard approach is entropic regularization, which replaces the linear program \eqref{eq:intro-kantorovich} by a smooth strictly convex problem. For probability vectors $a$ and $b$ and a cost matrix
$C = (c_{ij})$, we write
\begin{equation}\label{eq:intro-entropic-ot}
  \OT_{\eps}(a,b;C) = \min_{\omega \in \Pi(a,b)} \left\{ \sum_{i,j} c_{ij}\omega_{ij} + \eps E(\omega) \right\}, \qquad E(\omega) = \sum_{i,j}\omega_{ij}(\log\omega_{ij}-1),
\end{equation}
where $\eps > 0$ is the regularization parameter. Since the objective in \eqref{eq:intro-entropic-ot} is strictly convex, its minimizer is unique. Moreover, this minimizer can be computed efficiently by Sinkhorn-type scaling algorithms. This computational approach was introduced by Cuturi \cite{cuturi2013sinkhorn} and builds on classical matrix scaling ideas \cite{sinkhorn1964relationship,sinkhornknopp1967}. Entropic OT \eqref{eq:intro-entropic-ot} has become a standard tool in data science \cite{feydy2019interpolating,genevay2018learning,peyre2019computational}.

Although entropic regularization improves the computational efficiency of discrete OT, directly transporting high-dimensional empirical measures or continuous distributions can still be expensive. Gaussian mixture models provide a finite-dimensional representation of continuous distributions. Let
\begin{equation}\label{eq:intro-gmm}
  \mu = \sum_{k = 1}^{K} a_k\mu_k, \qquad \nu = \sum_{l = 1}^{L} b_l\nu_l
\end{equation}
be Gaussian mixtures. If $\mu_k = \mathcal N(m_{0,k},\Sigma_{0,k})$ and $\nu_l = \mathcal N(m_{1,l},\Sigma_{1,l})$, then the squared $2$-Wasserstein distance between the two Gaussian components is given by the closed formula
\begin{equation}\label{eq:intro-gaussian-w2}
  W_2^2(\mu_k,\nu_l) = \|m_{0,k}-m_{1,l}\|_2^2 + \Tr\left( \Sigma_{0,k} + \Sigma_{1,l} - 2(\Sigma_{0,k}^{1/2}\Sigma_{1,l}\Sigma_{0,k}^{1/2})^{1/2} \right).
\end{equation}
This formula is classical \cite{dowson1982frechet,gelbrich1990formula} and is the key ingredient in reducing transport between Gaussian mixtures to a finite component-level optimal transport problem.

The optimal-transport framework for Gaussian mixture models was first introduced in \cite{chen2019optimal}. This construction identifies a Gaussian mixture as a discrete measure on the space of Gaussian distributions and uses the Wasserstein distance between Gaussian components as the ground cost. Delon and Desolneux later developed this viewpoint by formulating a Wasserstein-type distance obtained by restricting continuous transport plans to Gaussian-mixture transport plans \cite{delon-2020}. Let $\GMM_{\infty}(\RR^d \times \RR^d)$ denote the set of probability measures on $\RR^d \times \RR^d$ that can be written as finite Gaussian mixtures. Delon and Desolneux define the mixture Wasserstein distance by
\begin{equation}\label{eq:intro-mw-continuous}
  \MW_2^2(\mu,\nu) = \inf_{\gamma \in \Pi(\mu,\nu) \cap \GMM_{\infty}(\RR^d \times \RR^d)} \int_{\RR^d \times \RR^d} \|x-y\|_2^2 \dd\gamma(x,y).
\end{equation}
For the Gaussian mixtures in \eqref{eq:intro-gmm}, they prove that the continuous problem \eqref{eq:intro-mw-continuous} is equivalent to the following finite component-level optimal transport problem:
\begin{equation}\label{eq:intro-mixture-wasserstein}
  \MW_2^2(\mu,\nu) = \min_{\omega \in \Pi(a,b)} \sum_{k = 1}^{K} \sum_{l = 1}^{L} W_2^2(\mu_k,\nu_l)\omega_{kl}.
\end{equation}
Thus, the restricted continuous transport problem \eqref{eq:intro-mw-continuous} reduces to a discrete OT problem between the mixture weights. Consequently, after the pairwise Gaussian costs have been computed, the remaining optimization is governed by the number of mixture components rather than by the number of sampled points or by a spatial discretization. This is the main computational advantage of the mixture formulation: the continuous OT-type problem \eqref{eq:intro-mw-continuous} is replaced by a much smaller component-level OT problem \eqref{eq:intro-mixture-wasserstein}.

The mixture Wasserstein framework has been extended in several directions. Dusson, Ehrlacher, and Nouaime developed Wasserstein-type metrics for general mixture models \cite{dusson2023wasserstein}. Wilson et al. extended Wasserstein-type distances between Gaussian mixtures to vector bundles and applied the resulting framework to statistical shape analysis \cite{wilson2024wasserstein}. Salmona, Desolneux, and Delon introduced Gromov--Wasserstein-like distances for Gaussian mixture models using pairwise Wasserstein distances between Gaussian components \cite{delon-2023}. More recently, Piening and Beinert proposed sliced variants of the mixture Wasserstein distance to reduce the computational cost of both the inner Gaussian Wasserstein computations and the outer component-level OT problem \cite{piening2025slicing}. Entropic Gaussian mixture OT has also been used in biological data analysis, including single-cell trajectory inference \cite{PMID:39710672}. These developments show that mixture-based OT provides a flexible and computationally effective way to compare structured distributions. However, these formulations remain based on balanced component-level transport. They therefore do not directly provide a mechanism for leaving unmatched, noisy, or outlying components untransported.

Partial formulations provide a natural way to address such partial matching problems in optimal transport and Gromov--Wasserstein theory. In optimal transport, partial transport allows only a prescribed amount of mass to be transported and leaves the remaining mass unmatched; it is closely related to the optimal partial transport problem and its free-boundary theory \cite{caffarelli2010free,figalli2010optimal}. Partial optimal transport and partial Gromov--Wasserstein formulations have also been studied from a computational viewpoint \cite{chapel2020partial}, and a partial Gromov--Wasserstein metric has recently been proposed for metric-measure spaces \cite{Bai2025}. 

The aim of this paper is to develop an entropic partial optimal transport (EPOT) framework for Gaussian mixtures and to study its asymptotic behavior with respect to the partial-transport penalty $\lambda > 0$ and the entropy parameter $\eps > 0$. We first formulate a finite EPOT problem by combining the dummy-point representation of partial transport with entropic regularization. For fixed $\eps > 0$, we prove that this model recovers balanced entropic OT as $\lambda \to \infty$, with quantitative estimates for the transported mass and the value, together with convergence of the optimizer.

We then specialize EPOT to Gaussian mixtures by using squared Gaussian Wasserstein distances as component costs. The resulting partial component coupling induces a continuous Gaussian-mixture transport plan through the optimal Gaussian maps between components. We prove narrow convergence of these induced plans in the large-penalty limit and analyze the subsequent zero-entropy limit, where the entropic optimizer selects the maximum-entropy optimizer among the unregularized mixture OT minimizers.

Finally, we extend the same partial-matching idea to Gromov--Wasserstein (GW) geometry by applying partial GW to the finite metric measure spaces of Gaussian components. This yields a partial mixture GW distance, whose basic properties follow from the partial GW theory. We also derive barycentric projection maps that convert component couplings into pointwise correspondences and illustrate the proposed methods through Gaussian-mixture and point-cloud matching experiments with outliers.

The paper is organized as follows: Section~\ref{sec:preliminaries} recalls partial OT and partial GW, together with the notation used throughout the paper. Section~\ref{sec:epot} develops the finite EPOT theory and proves convergence of values and optimizers in the large-penalty limit. Section~\ref{sec:gmm-epot} specializes EPOT to Gaussian mixture models, proves narrow convergence of the induced continuous transport plans, and analyzes the zero-entropy limit. Section~\ref{sec:pmgw} defines partial mixture GW distances and establishes their basic properties. Section~\ref{sec:barycentric} introduces barycentric projection maps for pointwise matching. Section~\ref{sec:numerical} presents numerical experiments. Section~\ref{sec:conclusions} concludes the paper.

\section{Preliminaries}\label{sec:preliminaries}

We recall the partial optimal transport and partial Gromov--Wasserstein formulations that underlie the constructions in this paper. For a nonnegative finite Borel measure $\alpha$, we write $|\alpha|$ for its total mass. If $\gamma$ is a measure on a product space, $(\pi_1)_\#\gamma$ and $(\pi_2)_\#\gamma$ denote its first and second marginals. We write $\MM_+(\Z)$ for the set of nonnegative finite Borel measures on a measurable space $\Z$.

\subsection{Partial optimal transport}\label{subsec:pot}

Let $(\X,\mu)$ and $(\Y,\nu)$ be Polish spaces equipped with Borel probability measures, and let $c:\X\times\Y\to\RR_{\geq 0}$ be a measurable cost function. Recall that $\Pi(\mu,\nu)$ denotes the set of probability measures on $\X\times\Y$ with marginals $\mu$ and $\nu$, as in the balanced Kantorovich problem \eqref{eq:intro-kantorovich}. The partial transport feasible set is
\begin{equation*}
  \Pi_{ \leq }(\mu,\nu) = \{\gamma \in \MM_+(\X\times\Y): (\pi_1)_\#\gamma \leq \mu,\; (\pi_2)_\#\gamma \leq \nu\}.
\end{equation*}
For a penalty parameter $\lambda > 0$, the partial optimal transport problem is
\begin{equation}\label{eq:pot-direct}
  \POT^{\lambda}(\mu,\nu) = \inf_{\gamma \in \Pi_{ \leq }(\mu,\nu)} \left\{\int_{\X\times\Y} c(x,y)\dd\gamma(x,y) + \lambda\left(|\mu-(\pi_1)_\#\gamma| + |\nu-(\pi_2)_\#\gamma|\right)\right\}.
\end{equation}
Since $\gamma \in \Pi_{\leq}(\mu,\nu)$, the measures
$\mu-(\pi_1)_\#\gamma$ and $\nu-(\pi_2)_\#\gamma$ are nonnegative.
As pushforwards preserve total mass and $\mu,\nu$ are probability measures, we have
\begin{equation*}
  |\mu-(\pi_1)_\#\gamma| = |\nu-(\pi_2)_\#\gamma| = 1-|\gamma|.
\end{equation*}
Thus the penalty term in \eqref{eq:pot-direct} is equal to $2\lambda(1-|\gamma|)$.

We use the following equivalent dummy-point formulation. Set
\begin{equation*}
  \widetilde\X = \X\cup\{\infty_{\X}\},\qquad \widetilde\Y = \Y\cup\{\infty_{\Y}\},\qquad \widetilde\mu = \mu + \delta_{\infty_{\X}},\qquad \widetilde\nu = \nu + \delta_{\infty_{\Y}}.
\end{equation*}
We define the extended cost by
\begin{equation*}
  \widetilde c(x,y) = \begin{cases} c(x,y) - 2\lambda, & (x,y)\in\X\times\Y,\\ 0, & \text{otherwise}. \end{cases}
\end{equation*}
Then the extended problem is
\begin{equation}\label{eq:pot-extended}
  \inf_{\widetilde\gamma \in \Pi(\widetilde\mu,\widetilde\nu)} \left\{\int_{\widetilde\X\times\widetilde\Y} \widetilde c(x,y)\dd\widetilde\gamma(x,y) + 2\lambda\right\}.
\end{equation}
The dummy-point formulation rewrites a partial coupling as a balanced coupling on the enlarged spaces. If $\gamma \in \Pi_{\leq}(\mu,\nu)$, its unmatched marginals are $\mu - (\pi_1)_\#\gamma$ and $\nu - (\pi_2)_\#\gamma$. Assigning these unmatched marginals to the dummy points defines the completion map $T:\Pi_{\leq}(\mu,\nu)\to\Pi(\widetilde\mu,\widetilde\nu)$ by
\begin{equation*}
  T(\gamma) = \gamma + \bigl(\mu - (\pi_1)_\#\gamma\bigr)\otimes\delta_{\infty_{\Y}} + \delta_{\infty_{\X}}\otimes\bigl(\nu - (\pi_2)_\#\gamma\bigr) + |\gamma|\delta_{(\infty_{\X},\infty_{\Y})}.
\end{equation*}
Then $T(\gamma)\in\Pi(\widetilde\mu,\widetilde\nu)$. Conversely, the restriction of any $\widetilde\gamma \in \Pi(\widetilde\mu,\widetilde\nu)$ to $\X\times\Y$ belongs to $\Pi_{\leq}(\mu,\nu)$.

\begin{lem}\label{lem:pot-to-ot}
For every $\gamma \in \Pi_{\leq}(\mu,\nu)$, one has
\begin{equation*}
  \int_{\widetilde\X\times\widetilde\Y} \widetilde c(x,y)\dd T(\gamma)(x,y) + 2\lambda = \int_{\X\times\Y} c(x,y)\dd\gamma(x,y) + \lambda\left(|\mu-(\pi_1)_\#\gamma| + |\nu-(\pi_2)_\#\gamma|\right).
\end{equation*}
Consequently, minimizers of \eqref{eq:pot-direct} and \eqref{eq:pot-extended} correspond through $T$ and restriction to $\X\times\Y$.
\end{lem}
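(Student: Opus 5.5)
The identity comes from integrating $\widetilde c$ against each of the four pieces of $T(\gamma)$. The correspondence of minimizers then follows from a lower-bound inequality for the restriction map, together with the fact that $T$ is a right inverse of restriction.

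\textbf{Step 1: the identity.} By definition, $\widetilde c$ vanishes off $\X\times\Y$. The three correction terms in $T(\gamma)$ are supported on $\X\times\{\infty_\Y\}$, $\{\infty_\X\}\times\Y$ and $\{(\infty_\X,\infty_\Y)\}$ respectively, so they contribute nothing. Hence
\begin{equation*}
  \int \widetilde c \dd T(\gamma) + 2\lambda = \int_{\X\times\Y} (c-2\lambda)\dd\gamma + 2\lambda = \int c\dd\gamma + 2\lambda(1-|\gamma|).
\end{equation*}
Using $|\mu-(\pi_1)_\#\gamma| = |\nu-(\pi_2)_\#\gamma| = 1-|\gamma|$, computed just before the lemma, this is the right-hand side. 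I will also verify in passing that $T(\gamma)\in\Pi(\widetilde\mu,\widetilde\nu)$ by computing marginals. The first marginal is $(\pi_1)_\#\gamma + (\mu-(\pi_1)_\#\gamma) + (1-|\gamma|)\delta_{\infty_\X} + |\gamma|\delta_{\infty_\X} = \widetilde\mu$, and the second marginal is symmetric.

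\textbf{Step 2: restriction does not increase the cost.} Let $\widetilde\gamma\in\Pi(\widetilde\mu,\widetilde\nu)$ and let $\gamma=\widetilde\gamma|_{\X\times\Y}$. Since $\widetilde c=0$ off $\X\times\Y$, we have $\int\widetilde c\dd\widetilde\gamma = \int (c-2\lambda)\dd\gamma$. By Step 1, this equals the cost of $T(\gamma)$. So the extended objective of $\widetilde\gamma$ coincides with the POT objective of its restriction $\gamma$, which lies in $\Pi_\le(\mu,\nu)$ as noted in the text. This is in fact an equality, not just an inequality, because the extended cost only sees the $\X\times\Y$ part.

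\textbf{Step 3: equality of values and matching of minimizers.} The map $\gamma\mapsto T(\gamma)$ preserves objective values by Step 1, and restriction preserves values by Step 2. Moreover, the restriction of $T(\gamma)$ to $\X\times\Y$ is $\gamma$ itself, since the added terms live off $\X\times\Y$. Taking infima in both directions therefore shows that the two infima coincide.

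It follows that if $\gamma$ minimizes \eqref{eq:pot-direct}, then $T(\gamma)$ minimizes \eqref{eq:pot-extended}. Conversely, if $\widetilde\gamma$ minimizes \eqref{eq:pot-extended}, its restriction minimizes \eqref{eq:pot-direct}. One caveat is that $T$ need not be surjective: different extended minimizers can share the same restriction. The correspondence should therefore be read through the pair of maps $T$ and restriction, exactly as stated.

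The only delicate point is measure-theoretic bookkeeping. We need $\mu-(\pi_1)_\#\gamma\ge0$ so that the terms of $T(\gamma)$ are genuine nonnegative measures, and the integrals must be well defined. Since $c\ge0$, the integral $\int c\dd\gamma$ is well defined in $[0,\infty]$, and $|\gamma|\le1$ is finite, so no $\infty-\infty$ issue arises.
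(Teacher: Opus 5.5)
Your argument is correct and is essentially the paper's proof: only the real--real block of $T(\gamma)$ sees $\widetilde c$, the identity then follows from $|\mu-(\pi_1)_\#\gamma|=|\nu-(\pi_2)_\#\gamma|=1-|\gamma|$, and minimizers are transferred via $T$ and restriction. One correction to a side remark: your caveat that $T$ need not be surjective is false, because for $\widetilde\gamma\in\Pi(\widetilde\mu,\widetilde\nu)$ with $\gamma=\widetilde\gamma|_{\X\times\Y}$ the marginal constraints force the $\X\times\{\infty_{\Y}\}$ block to be $(\mu-(\pi_1)_\#\gamma)\otimes\delta_{\infty_{\Y}}$, the $\{\infty_{\X}\}\times\Y$ block to be $\delta_{\infty_{\X}}\otimes(\nu-(\pi_2)_\#\gamma)$, and the corner mass to be $|\gamma|$, so $\widetilde\gamma=T(\gamma)$ and $T$ is a bijection whose inverse is restriction (this does not affect the validity of your proof).
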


\begin{proof}
By the definition of $\widetilde c$, only the real--real part contributes to the integral. Hence
\begin{equation}\label{eq:partial-to-balanced}
  \int_{\widetilde\X\times\widetilde\Y} \widetilde c\dd T(\gamma) + 2\lambda = \int_{\X\times\Y} (c - 2\lambda)\dd\gamma + 2\lambda = \int_{\X\times\Y} c\dd\gamma + 2\lambda(1-|\gamma|).
\end{equation}
Since $\gamma \in \Pi_{\leq}(\mu,\nu)$ and $\mu,\nu$ are probability measures,
\begin{equation*}
  |\mu-(\pi_1)_\#\gamma| = |\nu-(\pi_2)_\#\gamma| = 1-|\gamma|.
\end{equation*}
Combining this with \eqref{eq:partial-to-balanced} gives the stated identity. The minimizer correspondence follows by applying this identity to $T(\gamma)$ and by restricting extended couplings to $\X\times\Y$.
\end{proof}

\begin{rem}\label{rem:lambda-interpretation}
The shift by $-2\lambda$ gives a reward of $2\lambda$ per unit of mass transported between real points. Thus $\lambda$ controls the selectivity of the partial matching. Small values of $\lambda$ allow costly components to remain unmatched, whereas large values of $\lambda$ encourage more real mass to be transported.
\end{rem}

The following elementary completion lemma will be used below. It says that any partial real--real coupling can be completed to a balanced coupling by adding a product coupling of the residual marginals.

\begin{lem}\label{lem:dominant-coupling}
Let $\widetilde\gamma \in \Pi(\widetilde\mu,\widetilde\nu)$ and put $\gamma^0 = \widetilde\gamma|_{\X\times\Y}$. If $|\gamma^0| < 1$, we define
\begin{equation*}
  \mu^r = \mu - (\pi_1)_\#\gamma^0, \qquad \nu^r = \nu - (\pi_2)_\#\gamma^0.
\end{equation*}
Then
\begin{equation}\label{eq:completion-coupling}
  \overline\gamma = \gamma^0 + \frac{1}{1 - |\gamma^0|}\mu^r\otimes\nu^r
\end{equation}
belongs to $\Pi(\mu,\nu)$. In particular, $\overline\gamma$ dominates $\gamma^0$ as a measure on $\X\times\Y$.
\end{lem}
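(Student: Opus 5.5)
Our plan is to verify directly that $\overline\gamma$ is a nonnegative measure with the correct marginals; domination is then immediate. First, we record the basic facts about $\gamma^0$. Since $\widetilde\gamma\in\Pi(\widetilde\mu,\widetilde\nu)$ and $\gamma^0$ is its restriction to $\X\times\Y$, for any Borel $A\subset\X$ we have
\begin{equation*}
  (\pi_1)_\#\gamma^0(A) = \widetilde\gamma(A\times\X\times\Y\cap A\times\Y) \le \widetilde\gamma(A\times\widetilde\Y) = \widetilde\mu(A) = \mu(A).
\end{equation*}
Hence $(\pi_1)_\#\gamma^0\le\mu$, and likewise $(\pi_2)_\#\gamma^0\le\nu$; this is the restriction remark stated before Lemma~\ref{lem:pot-to-ot}. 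Therefore $\mu^r$ and $\nu^r$ are nonnegative finite measures. Because pushforwards preserve total mass and $\mu,\nu$ are probability measures, $|\mu^r| = |\nu^r| = 1-|\gamma^0|$, which is strictly positive by the hypothesis $|\gamma^0|<1$. Consequently the normalization in \eqref{eq:completion-coupling} is well defined, and $\overline\gamma\ge\gamma^0\ge0$ as measures on $\X\times\Y$. This already gives the domination claim.

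Next, we compute the marginals. For the product measure, $(\pi_1)_\#(\mu^r\otimes\nu^r) = |\nu^r|\,\mu^r = (1-|\gamma^0|)\mu^r$. Thus
\begin{equation*}
  (\pi_1)_\#\overline\gamma = (\pi_1)_\#\gamma^0 + \mu^r = \mu,
\end{equation*}
and symmetrically $(\pi_2)_\#\overline\gamma = (\pi_2)_\#\gamma^0 + \nu^r = \nu$. Its total mass is $|\gamma^0| + (1-|\gamma^0|) = 1$, so $\overline\gamma$ is a probability measure, and therefore $\overline\gamma\in\Pi(\mu,\nu)$.

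There is no serious obstacle here. The only point that needs care is the first step: establishing that the restricted marginals are dominated by $\mu$ and $\nu$, so that $\mu^r,\nu^r\ge0$, and that the two residual masses coincide. This is exactly what makes the scaling factor $1/(1-|\gamma^0|)$ produce both marginals simultaneously. We will state this step explicitly using the marginal constraints of $\widetilde\gamma$ restricted to Borel sets $A\subset\X$ and $B\subset\Y$. Note that the dummy points are excluded from these sets, so $\widetilde\mu(A)=\mu(A)$ and $\widetilde\nu(B)=\nu(B)$.
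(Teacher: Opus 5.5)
Your proposal is correct and follows essentially the same argument as the paper: nonnegativity of $\mu^r,\nu^r$ from $\gamma^0\in\Pi_{\leq}(\mu,\nu)$, the equal residual masses $|\mu^r|=|\nu^r|=1-|\gamma^0|$, and the product-marginal identities $(\pi_1)_\#(\mu^r\otimes\nu^r)=|\nu^r|\mu^r$ and $(\pi_2)_\#(\mu^r\otimes\nu^r)=|\mu^r|\nu^r$. One cosmetic fix: the set in your first display is garbled and should be $(A\times\widetilde\Y)\cap(\X\times\Y)=A\times\Y$, giving $(\pi_1)_\#\gamma^0(A)=\widetilde\gamma(A\times\Y)\le\widetilde\gamma(A\times\widetilde\Y)=\widetilde\mu(A)=\mu(A)$.
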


\begin{proof}
Since $\gamma^0 \in \Pi_{\leq}(\mu,\nu)$, the residual measures $\mu^r$ and $\nu^r$ are nonnegative and satisfy
\begin{equation*}
  |\mu^r| = |\nu^r| = 1 - |\gamma^0|.
\end{equation*}
The first marginal of $\mu^r\otimes\nu^r$ is $|\nu^r|\mu^r$, and its second marginal is $|\mu^r|\nu^r$. Hence, the product term in \eqref{eq:completion-coupling} has first marginal $\mu^r$ and second marginal $\nu^r$. Therefore $\overline\gamma$ has marginals $\mu$ and $\nu$, which proves $\overline\gamma \in \Pi(\mu,\nu)$.
\end{proof}

\begin{thm}\label{thm:lambda-to-infty}
Assume that $c$ is bounded and that
$\lambda > \frac12\|c\|_{L^\infty(\X\times\Y)}$. Then
\begin{equation*}
  \POT^{\lambda}(\mu,\nu) = \OT(\mu,\nu).
\end{equation*}
Moreover, any minimizer $\gamma$ of \eqref{eq:pot-direct}, if it exists, satisfies $|\gamma|=1$ and hence belongs to $\Pi(\mu,\nu)$.
\end{thm}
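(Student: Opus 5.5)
The plan is to compare the partial objective with the balanced one through the completion of Lemma~\ref{lem:dominant-coupling}. Write $J(\gamma)=\int c\dd\gamma+2\lambda(1-|\gamma|)$ for the objective of \eqref{eq:pot-direct}, using the identity $|\mu-(\pi_1)_\#\gamma|=|\nu-(\pi_2)_\#\gamma|=1-|\gamma|$ noted above. The proof has two inequalities plus a strictness statement.

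The easy direction is $\POT^\lambda(\mu,\nu)\le\OT(\mu,\nu)$. Since $\Pi(\mu,\nu)\subset\Pi_{\le}(\mu,\nu)$ and $J(\gamma)=\int c\dd\gamma$ when $|\gamma|=1$, taking the infimum over balanced plans gives it.

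For the reverse inequality, fix $\gamma\in\Pi_{\le}(\mu,\nu)$ with $|\gamma|<1$.
\begin{enumerate}[label=(\roman*)]
\item Apply Lemma~\ref{lem:dominant-coupling} to $T(\gamma)$, whose restriction to $\X\times\Y$ is $\gamma$. This gives
\[
\overline\gamma=\gamma+\frac{1}{1-|\gamma|}\mu^r\otimes\nu^r\in\Pi(\mu,\nu).
\]
\item The added product term has total mass $|\mu^r||\nu^r|/(1-|\gamma|)=1-|\gamma|$. Since $c\le\|c\|_{L^\infty}$, we get
\[
\OT(\mu,\nu)\le\int c\dd\overline\gamma\le\int c\dd\gamma+\|c\|_{L^\infty}(1-|\gamma|).
\]
\item Because $\|c\|_{L^\infty}<2\lambda$ and $1-|\gamma|>0$, the right-hand side is strictly less than $J(\gamma)$. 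Hence $\OT(\mu,\nu)<J(\gamma)$ for every such $\gamma$.
\end{enumerate}
When $|\gamma|=1$ we already have $J(\gamma)=\int c\dd\gamma\ge\OT(\mu,\nu)$. So $J(\gamma)\ge\OT(\mu,\nu)$ on all of $\Pi_{\le}(\mu,\nu)$, and taking the infimum gives $\POT^\lambda(\mu,\nu)\ge\OT(\mu,\nu)$, hence equality.

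For the statement about minimizers, let $\gamma$ be a minimizer of \eqref{eq:pot-direct} and suppose $|\gamma|<1$. Step~(iii) gives $J(\gamma)>\OT(\mu,\nu)=\POT^\lambda(\mu,\nu)$, which contradicts minimality. So $|\gamma|=1$. Both marginals of $\gamma$ are then dominated by probability measures and have mass one, so they equal $\mu$ and $\nu$, and $\gamma\in\Pi(\mu,\nu)$.

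The only delicate point is the strict inequality. It needs $\lambda>\tfrac12\|c\|_{L^\infty}$ strictly together with $1-|\gamma|>0$. It also needs $\OT(\mu,\nu)$ to be finite, which holds because $c$ is bounded. Measurability of the product term is immediate, and integrability follows from boundedness of $c$, so no other step should cause trouble.
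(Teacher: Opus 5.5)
Your proposal is correct and follows essentially the same argument as the paper. You get the easy inequality from $\Pi(\mu,\nu)\subset\Pi_{\leq}(\mu,\nu)$, complete a deficient $\gamma$ via Lemma~\ref{lem:dominant-coupling} applied to $T(\gamma)$, and use the strict bound $\int c\dd\overline\gamma\le\int c\dd\gamma+\|c\|_{L^\infty}(1-|\gamma|)<J(\gamma)$ both to get the reverse inequality and to rule out minimizers with $|\gamma|<1$; your explicit check that $|\gamma|=1$ forces the marginals to equal $\mu$ and $\nu$ fills in a step the paper leaves implicit.
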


\begin{proof}
Since $\Pi(\mu,\nu)\subset \Pi_{\leq}(\mu,\nu)$ and the penalty term in \eqref{eq:pot-direct} vanishes on $\Pi(\mu,\nu)$, we have
\begin{equation*}
  \POT^{\lambda}(\mu,\nu) \leq \OT(\mu,\nu).
\end{equation*}

We prove the reverse inequality. Let $\gamma \in \Pi_{\leq}(\mu,\nu)$. If $|\gamma|=1$, then $\gamma \in \Pi(\mu,\nu)$ and
\begin{equation*}
  \int_{\X\times\Y} c\dd\gamma + 2\lambda(1-|\gamma|) = \int_{\X\times\Y} c\dd\gamma \geq \OT(\mu,\nu).
\end{equation*}
Suppose that $|\gamma|<1$. Applying Lemma~\ref{lem:dominant-coupling} to $T(\gamma)$, we obtain $\overline\gamma \in \Pi(\mu,\nu)$ such that $\overline\gamma \geq \gamma$ and $|\overline\gamma-\gamma|=1-|\gamma|$. 
Hence,
\begin{equation*}
  \int_{\X\times\Y} c\dd\overline\gamma = \int_{\X\times\Y} c\dd\gamma + \int_{\X\times\Y} c\dd(\overline\gamma-\gamma) \leq \int_{\X\times\Y} c\dd\gamma + \|c\|_{L^\infty(\X\times\Y)}(1-|\gamma|) < \int_{\X\times\Y} c\dd\gamma + 2\lambda(1-|\gamma|).
\end{equation*}
Since $\overline\gamma \in \Pi(\mu,\nu)$, the left-hand side is at least $\OT(\mu,\nu)$. Therefore
\begin{equation*}
  \int_{\X\times\Y} c\dd\gamma + 2\lambda(1-|\gamma|) > \OT(\mu,\nu)
\end{equation*}
whenever $|\gamma|<1$. Together with the case $|\gamma|=1$, this gives
\begin{equation*}
  \POT^{\lambda}(\mu,\nu) \geq \OT(\mu,\nu).
\end{equation*}
Thus $\POT^{\lambda}(\mu,\nu)=\OT(\mu,\nu)$. The strict inequality also shows that no minimizer can have $|\gamma|<1$.
\end{proof}

\begin{rem}\label{rem:bounded-cost}
The boundedness assumption is used only to obtain the uniform threshold
$\lambda > \frac12\|c\|_{L^\infty(\X\times\Y)}$. In the finite component problems considered below, the cost is a finite matrix, so this assumption is automatic.
\end{rem}

\subsection{Gromov--Wasserstein and partial Gromov--Wasserstein distances}\label{subsec:gw}

Let $(\X,d_{\X},\mu)$ and $(\Y,d_{\Y},\nu)$ be metric measure spaces. For $p,q \geq 1$, the balanced Gromov--Wasserstein distance is
\begin{align}\label{eq:gw-renamed}
  &\left(\GW_{p,q}\bigl((\X,d_{\X},\mu),(\Y,d_{\Y},\nu)\bigr)\right)^p \notag \\
  &= \inf_{\gamma \in \Pi(\mu,\nu)} \left\{\int_{(\X\times\Y)^2}|d_{\X}(x,x')^q-d_{\Y}(y,y')^q|^p\dd\gamma(x,y)\dd\gamma(x',y')\right\}.
\end{align}
This distance compares intrinsic distance structures rather than point locations, and hence can be applied even when the two spaces have different ambient dimensions or are observed up to isometry. We use the notation $\GW^{\lambda}_{p,q}$ for the partial Gromov--Wasserstein distance, where the superscript $\lambda$ denotes the partial matching penalty. Following Bai et al.~\cite{Bai2025}, we define
\begin{align}\label{eq:pgw-renamed}
  &\left(\GW^{\lambda}_{p,q}\bigl((\X,d_{\X},\mu),(\Y,d_{\Y},\nu)\bigr)\right)^p \notag \\
  &= \inf_{\gamma \in \Pi_{\leq}(\mu,\nu)} \left\{\int_{(\X\times\Y)^2}\left(|d_{\X}(x,x')^q-d_{\Y}(y,y')^q|^p-2\lambda\right)\dd\gamma(x,y)\dd\gamma(x',y') + \lambda\bigl(|\mu|^2+|\nu|^2\bigr) \right\}.
\end{align}
The following result collects the basic properties of the partial Gromov--Wasserstein distance proved by Bai et al.~\cite{Bai2025}, rewritten in the notation of \eqref{eq:pgw-renamed}.

\begin{thm}[Bai et al.~\cite{Bai2025}]\label{thm:gw-lambda}
The partial Gromov--Wasserstein distance defined by \eqref{eq:pgw-renamed} has the following properties: 
\begin{enumerate}[label=\textup{(\roman*)}]
  \item $\GW^{\lambda}_{p,q}$ admits a minimizer.
  \item $\GW^{\lambda}_{p,q}$ defines a metric on metric measure spaces modulo strong isomorphism.
  \item If the supports are compact and
  \begin{equation*}
  \lambda \geq \max_{x,x' \in \supp(\mu),\;y,y' \in \supp(\nu)} |d_{\X}(x,x')^q-d_{\Y}(y,y')^q|^p,
\end{equation*}
then the partial distance \eqref{eq:pgw-renamed} coincides with the balanced distance \eqref{eq:gw-renamed}. In particular, for finite component spaces, $\GW^{\lambda}_{p,q}$ converges to $\GW_{p,q}$ as $\lambda \to \infty$.
\end{enumerate}
\end{thm}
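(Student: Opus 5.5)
This theorem is a restatement of results of Bai et al.~\cite{Bai2025}, so most of the work is translating their statements into the normalization of \eqref{eq:pgw-renamed}. I would then give short self-contained arguments where that is cheap, mirroring the structure of Theorem~\ref{thm:lambda-to-infty}. Write $L(x,y,x',y') = |d_{\X}(x,x')^q-d_{\Y}(y,y')^q|^p$. Since $|\gamma|\le 1$ and $|\mu|=|\nu|=1$, the objective equals
\begin{equation*}
\int L\dd\gamma\otimes\gamma + 2\lambda\bigl(1-|\gamma|^2\bigr).
\end{equation*}
This is the form in which all three items become transparent.

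For (i), I would use the direct method. $\Pi_{\leq}(\mu,\nu)$ is tight, because its marginals are dominated by the tight measures $\mu,\nu$, and it is narrowly closed. By Prokhorov it is therefore narrowly compact. For continuity of the objective I would use the following facts:
\begin{itemize}
\item the map $\gamma\mapsto\gamma\otimes\gamma$ is narrowly continuous;
\item $L$ is continuous and nonnegative, so $\int L\dd\gamma\otimes\gamma$ is lower semicontinuous;
\item $|\gamma|$ is narrowly continuous, since it is the integral of the constant function $1$.
\end{itemize}
Hence the objective is lower semicontinuous and a minimizer exists.

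For (iii), I would follow Theorem~\ref{thm:lambda-to-infty}. Take $\gamma$ with $|\gamma|=s<1$ and complete it via Lemma~\ref{lem:dominant-coupling} to $\overline\gamma=\gamma+\rho\in\Pi(\mu,\nu)$ with $|\rho|=1-s$. Expanding $\overline\gamma\otimes\overline\gamma$, the extra mass is
\begin{equation*}
|\overline\gamma\otimes\overline\gamma-\gamma\otimes\gamma| = 1-s^2.
\end{equation*}
Since this extra mass lives on $\supp\mu\times\supp\nu$, where $L\le M:=\max L$, we get
\begin{equation*}
\int L\dd\overline\gamma\otimes\overline\gamma \le \int L\dd\gamma\otimes\gamma + M(1-s^2) \le \int L\dd\gamma\otimes\gamma + 2\lambda(1-s^2)
\end{equation*}
whenever $\lambda\ge M/2$, and in particular under the hypothesis $\lambda\ge M$. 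So the partial value is at least $\GW_{p,q}^p$. The reverse inequality holds trivially because $\Pi(\mu,\nu)\subset\Pi_{\leq}(\mu,\nu)$. For finite component spaces $M<\infty$, so the two distances agree for all large $\lambda$, which gives the convergence claim.

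For (ii), I would cite Bai et al.~\cite{Bai2025} for the triangle inequality, checking only that their definition matches ours. Their formulation uses $|\mu^{\otimes2}-\gamma^{\otimes2}|$-type penalties, and these reduce to $\lambda(|\mu|^2+|\nu|^2-2|\gamma|^2)$, which is exactly our constant-plus-$(-2\lambda)$ shift. The remaining metric axioms I would check directly:
\begin{itemize}
\item symmetry, by swapping the coordinates of $\gamma$;
\item nonnegativity, since $1-|\gamma|^2\ge0$;
\item identity of indiscernibles: a zero value forces $|\gamma|=1$ and $L=0$ $\gamma\otimes\gamma$-a.e., hence a measure-preserving isometry between supports, that is, a strong isomorphism.
\end{itemize}
The main obstacle is the triangle inequality. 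It needs a gluing argument for partial couplings, done either by gluing the balanced completions on the dummy-augmented spaces or via Bai et al.'s reduction of partial GW to a balanced GW problem with an auxiliary point. I would defer that argument to~\cite{Bai2025} rather than reprove it.
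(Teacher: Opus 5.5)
Your proposal is correct, and it goes further than the paper. The paper gives no proof of this theorem: it only quotes Bai et al.~\cite{Bai2025}, after noting that their penalty reduces to the shifted form in \eqref{eq:pgw-renamed}. That is the same normalization check you carry out in (ii).

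You instead make (i) and (iii) independent of the reference. For (i) you use the direct method: tightness and narrow closedness of $\Pi_{\leq}(\mu,\nu)$, narrow continuity of $\gamma\mapsto\gamma\otimes\gamma$ and of $|\gamma|$, and lower semicontinuity of the distortion integral. For (iii) you transplant the completion argument of Theorem~\ref{thm:lambda-to-infty}. The key observation is that $|\overline\gamma\otimes\overline\gamma-\gamma\otimes\gamma|=1-|\gamma|^2$, which matches the penalty $2\lambda(1-|\gamma|^2)$ exactly. This buys a unified treatment with the OT case and a sharper conclusion: coincidence already holds for $2\lambda\geq M$, mirroring the factor $\frac12$ in Theorem~\ref{thm:lambda-to-infty}, whereas the statement assumes $\lambda\geq M$.

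For (ii), you and the paper rest on the same thing, namely \cite{Bai2025} for the triangle inequality, so neither version is self-contained there. In your identity-of-indiscernibles check, make explicit that you use the minimizer from (i). A vanishing infimum by itself does not give a $\gamma$ with $|\gamma|=1$ and vanishing distortion. Once you have such a minimizer, the standard balanced GW argument shows that $\supp\gamma$ is the graph of a measure-preserving isometry between the supports.
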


\section{Entropic Partial Optimal Transport}\label{sec:epot}

This section develops the finite entropic partial optimal transport problem and analyzes its large-penalty limit. This is the setting needed for Gaussian mixture components. Throughout this section, we use the convention $0\log 0 = 0$.

\subsection{Discrete formulation}\label{subsec:epot-discrete}

Let $\X = \{1,\ldots,n\}$ and $\Y = \{1,\ldots,m\}$. Let
\begin{equation*}
  a = (a_1,\ldots,a_n),\qquad b = (b_1,\ldots,b_m)
\end{equation*}
be probability vectors with positive entries. We identify them with the measures $\mu = \sum_{i=1}^n a_i\delta_i$ and $\nu = \sum_{j=1}^m b_j\delta_j$. For nonnegative vectors $r$ and $s$ with equal total mass, we denote by $\Pi(r,s)$ the set of nonnegative matrices with row sums $r$ and column sums $s$. Let $C = (c_{ij})\in \RR^{n\times m}_{\geq 0}$ be the cost matrix. The extended marginal vectors are
\begin{equation*}
  \widetilde a = (a_1,\ldots,a_n,1),\qquad \widetilde b = (b_1,\ldots,b_m,1).
\end{equation*}
For $\lambda>0$, we define the extended cost matrix $\widetilde C^{\lambda}\in\RR^{(n+1)\times(m+1)}$ by
\begin{equation*}
  \widetilde c^{\lambda}_{ij} = \begin{cases} c_{ij} - 2\lambda, & 1 \leq i \leq n \text{ and } 1 \leq j \leq m,\\ 0, & \text{otherwise.} \end{cases}
\end{equation*}
For a nonnegative matrix $A=(A_{ij})$, we define
\begin{equation*}
  E(A) = \sum_{i,j}A_{ij}(\log A_{ij} - 1).
\end{equation*}

\begin{dfn}\label{dfn:epot}
For $\eps > 0$ and $\lambda > 0$, the entropic partial optimal transport problem is defined by
\begin{equation}\label{eq:epot}
  \OT_{\eps,\lambda}(a,b;C) = \min_{\Gamma \in \Pi(\widetilde a,\widetilde b)} \left\{\sum_{i = 1}^{n + 1}\sum_{j = 1}^{m + 1}\widetilde c^{\lambda}_{ij}\Gamma_{ij} + \eps E(\Gamma) + 2\lambda\right\}.
\end{equation}
The balanced entropic optimal transport problem is defined by
\begin{equation}\label{eq:eot}
  \OT_{\eps}(a,b;C) = \min_{\omega \in \Pi(a,b)} \left\{\sum_{i = 1}^{n}\sum_{j = 1}^{m}c_{ij}\omega_{ij} + \eps E(\omega)\right\}.
\end{equation}
\end{dfn}
Throughout this section, the cost matrix $C$ is fixed, and we write $\OT_{\eps,\lambda}(a,b)$ and $\OT_{\eps}(a,b)$ for the values in \eqref{eq:epot} and \eqref{eq:eot}, respectively.

\begin{prop}\label{prop:exist-unique}
For every $\eps > 0$ and $\lambda > 0$, the problem \eqref{eq:epot} admits a unique minimizer, denoted by $\Gamma^{\eps,\lambda}$.
\end{prop}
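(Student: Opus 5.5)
The plan is to use the standard existence–uniqueness argument for strictly convex objectives on compact convex sets, applied to the finite-dimensional feasible set $\Pi(\widetilde a,\widetilde b)\subset\RR^{(n+1)\times(m+1)}$.

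\emph{Nonemptiness.} The feasible set is nonempty. It contains $T(0)$, the completion of the zero partial coupling: its real block is $0$, the last column is $a$, the last row is $b$, and the corner entry is $0$. Alternatively, since $\widetilde a$ and $\widetilde b$ both have total mass $2$, the product $\frac12\widetilde a\otimes\widetilde b$ is feasible.

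\emph{Compactness.} The set is cut out by finitely many linear equalities and the inequalities $\Gamma_{ij}\ge 0$, so it is closed and convex. Every entry satisfies $0\le\Gamma_{ij}\le\min(\widetilde a_i,\widetilde b_j)\le 1$, so the set is bounded, hence compact.

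\emph{Existence.} The objective
\[
F(\Gamma)=\sum_{i,j}\widetilde c^{\lambda}_{ij}\Gamma_{ij}+\eps E(\Gamma)+2\lambda
\]
is the sum of a linear function, a constant, and $\eps$ times $E$. The function $t\mapsto t(\log t-1)$ is continuous on $[0,\infty)$ under the convention $0\log 0=0$, since $t\log t\to 0$ as $t\downarrow 0$. Hence $F$ is continuous on the compact feasible set, and Weierstrass' theorem gives a minimizer.

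\emph{Uniqueness.} The scalar function $\phi(t)=t(\log t-1)$ satisfies $\phi''(t)=1/t>0$ on $(0,\infty)$, so it is strictly convex on $(0,\infty)$. It is also strictly convex on $[0,\infty)$: for $s\neq t$ with one of them equal to $0$, the strict inequality $\phi(\theta s+(1-\theta)t)<\theta\phi(s)+(1-\theta)\phi(t)$ holds, for example by continuity together with strict convexity on the open half-line. Since $E$ is a separable sum of $\phi$ over the entries, $E$ is strictly convex on the nonnegative orthant. Because $\eps>0$ and the remaining terms are affine, $F$ is strictly convex on the convex feasible set.

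Now suppose $\Gamma_1\neq\Gamma_2$ were both minimizers. Their midpoint is feasible, and strict convexity gives
\[
F\bigl(\tfrac12(\Gamma_1+\Gamma_2)\bigr)<\min F,
\]
a contradiction. Hence the minimizer is unique, and we denote it $\Gamma^{\eps,\lambda}$.

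The only delicate point is the behaviour of the entropy at zero entries, which affects both the continuity of $E$ and its strict convexity on the closed orthant. I will handle this explicitly through the $0\log0=0$ convention and the one-dimensional argument above. Note that positivity of the minimizer is not needed for this proposition.
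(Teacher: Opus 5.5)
Your proof is correct and follows essentially the same route as the paper: $\Pi(\widetilde a,\widetilde b)$ is a nonempty compact convex polytope, $E$ is continuous and strictly convex there, so a minimizer exists by compactness and is unique by strict convexity. Your extra details (the explicit feasible element $T(0)$, the entrywise bound by $1$, and the strict convexity of $t(\log t-1)$ at $t=0$) fill in points the paper simply asserts.
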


\begin{proof}
The feasible set $\Pi(\widetilde a,\widetilde b)$ is a nonempty compact convex polytope in $\RR^{(n+1)\times(m+1)}$. The function $x\mapsto x(\log x-1)$ is continuous and strictly convex on $[0,\infty)$. Hence, $E$ is continuous and strictly convex on $\Pi(\widetilde a,\widetilde b)$. Since the cost term in \eqref{eq:epot} is linear, the objective is continuous and strictly convex. Thus, existence follows from compactness, and uniqueness follows from strict convexity.
\end{proof}

We first show that the mass transported between the original points converges to one as $\lambda\to\infty$.

\begin{prop}\label{prop:mass-convergence}
Fix $\eps > 0$. Let $\Gamma^{\eps,\lambda}$ be the unique minimizer of \eqref{eq:epot}, and define
\begin{equation*}
  Z_{\lambda} = \sum_{i=1}^{n}\sum_{j=1}^{m}\Gamma^{\eps,\lambda}_{ij}.
\end{equation*}
Then
\begin{equation}\label{eq:zlambda-to-one}
  Z_{\lambda}\to 1\qquad\text{as }\lambda \to \infty.
\end{equation}
More precisely, if $M = \displaystyle \max_{1\leq i\leq n,\;1\leq j\leq m}c_{ij}$ and $2\lambda > M$, then
\begin{equation}\label{eq:mass-rate}
  0 \leq 1 - Z_{\lambda} \leq \frac{\eps(n + 1)(m + 1)}{2\lambda - M}.
\end{equation}
\end{prop}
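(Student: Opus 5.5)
Write $\Gamma=\Gamma^{\eps,\lambda}$ and $\Phi(\Gamma)$ for the objective in \eqref{eq:epot}. The plan is a comparison argument. We compare $\Phi(\Gamma)$ with $\Phi(T(\overline\gamma))$, where $\overline\gamma\in\Pi(a,b)$ is the completion of $\gamma^0=\Gamma|_{\X\times\Y}$ given by Lemma~\ref{lem:dominant-coupling}. The linear cost parts differ by at least $(2\lambda-M)(1-Z_\lambda)$, while the entropy parts differ by at most $\eps$ times a bounded quantity. Optimality of $\Gamma$ then gives the rate \eqref{eq:mass-rate}, and \eqref{eq:zlambda-to-one} follows by letting $\lambda\to\infty$. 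The lower bound $Z_\lambda\le 1$ is immediate, since $\gamma^0\in\Pi_{\leq}(a,b)$.

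\emph{Construction.} If $Z_\lambda=1$ there is nothing to prove, so assume $Z_\lambda<1$. Form $\overline\gamma=\gamma^0+\frac{1}{1-Z_\lambda}\mu^r\otimes\nu^r\in\Pi(a,b)$ and set $\widetilde\Gamma=T(\overline\gamma)$. This is the balanced extended coupling with real block $\overline\gamma$, zero dummy rows and columns except for the single entry $\widetilde\Gamma_{n+1,m+1}=1$.

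\emph{Cost comparison.} Only the real block contributes to the linear term. Since $\overline\gamma-\gamma^0\ge0$ has mass $1-Z_\lambda$ and entries of $\widetilde C^\lambda$ on the real block are at most $M-2\lambda<0$, we get
\[
\langle \widetilde C^\lambda,\Gamma\rangle-\langle\widetilde C^\lambda,\widetilde\Gamma\rangle=-\sum_{i\le n,\,j\le m}(c_{ij}-2\lambda)(\overline\gamma-\gamma^0)_{ij}\ \ge\ (2\lambda-M)(1-Z_\lambda).
\]

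\emph{Entropy comparison.} Optimality gives $\Phi(\Gamma)\le\Phi(\widetilde\Gamma)$, and hence
\[
(2\lambda-M)(1-Z_\lambda)\le \eps\bigl(E(\widetilde\Gamma)-E(\Gamma)\bigr).
\]
So it suffices to prove $E(\widetilde\Gamma)-E(\Gamma)\le (n+1)(m+1)$. Every entry of both matrices lies in $[0,1]$, because each is dominated by a marginal entry of $\widetilde a$, which is at most $1$. On $[0,1]$ the function $x(\log x-1)$ takes values in $[-1,0]$. Therefore $E(\widetilde\Gamma)\le 0$ and $E(\Gamma)\ge -(n+1)(m+1)$, which gives the claim. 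Dividing by $2\lambda-M>0$ yields \eqref{eq:mass-rate}.

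The main point requiring care is the entropy bound, namely ensuring that all entries lie in $[0,1]$. This holds here because the dummy masses equal $1$ and $a,b$ are probability vectors. A sharper bound through the entropy of a product coupling is possible but not needed; the crude bound $(n+1)(m+1)$ is exactly the one in the statement.
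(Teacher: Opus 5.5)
Your proposal is correct and follows essentially the same route as the paper. The paper also completes the real block with the product coupling of the residual marginals (written explicitly as $\Delta_{ij}=r_is_j/(1-Z_\lambda)$, which is exactly your $T(\overline\gamma)$ from Lemma~\ref{lem:dominant-coupling}), bounds the linear-cost gain by $(2\lambda-M)(1-Z_\lambda)$, and bounds the entropy difference by $(n+1)(m+1)$ using that all entries lie in $[0,1]$.
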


\begin{proof}
Let $\Gamma = \Gamma^{\eps,\lambda}$ and let $\Gamma^0$ be its $n\times m$ real--real block. Since $\Gamma\in\Pi(\widetilde a,\widetilde b)$,
\begin{equation*}
  0 \leq Z_{\lambda}=\sum_{i=1}^{n}\sum_{j=1}^{m}\Gamma^0_{ij} \leq \sum_{i=1}^{n}a_i=1.
\end{equation*}
If $Z_{\lambda}=1$, there is nothing to prove. Suppose that $Z_{\lambda}<1$, and set
\begin{equation*}
  r_i = a_i-\sum_{j=1}^{m}\Gamma^0_{ij}=\Gamma_{i,m+1},\qquad s_j = b_j-\sum_{i=1}^{n}\Gamma^0_{ij}=\Gamma_{n+1,j}.
\end{equation*}
Then $r_i\geq0$, $s_j\geq0$, and
\begin{equation*}
  \sum_{i=1}^{n}r_i=\sum_{j=1}^{m}s_j=1-Z_{\lambda}.
\end{equation*}
Define
\begin{equation*}
  \Delta_{ij}=\frac{r_i s_j}{1-Z_{\lambda}},\qquad \overline\Gamma^0_{ij}=\Gamma^0_{ij}+\Delta_{ij}.
\end{equation*}
Since
\begin{equation*}
  \sum_{j=1}^{m}\Delta_{ij}=r_i,\qquad \sum_{i=1}^{n}\Delta_{ij}=s_j,
\end{equation*}
the matrix $\overline\Gamma^0$ has row sums $a$ and column sums $b$. Therefore,
\begin{equation*}
  \overline\Gamma =
  \begin{pmatrix}
    \overline\Gamma^0 & 0\\
    0^{\top} & 1
  \end{pmatrix}
  \in\Pi(\widetilde a,\widetilde b).
\end{equation*}
Moreover,
\begin{equation*}
  \sum_{i=1}^{n}\sum_{j=1}^{m}\Delta_{ij}=1-Z_{\lambda}.
\end{equation*}
Since $\widetilde c^\lambda_{ij}=0$ whenever $i=n+1$ or $j=m+1$, only the real--real block contributes to the change in the linear cost. Hence,
\begin{equation}\label{eq:mass-linear-estimate}
  \sum_{i = 1}^{n + 1}\sum_{j = 1}^{m + 1}\widetilde c^\lambda_{ij}(\overline\Gamma_{ij}-\Gamma_{ij}) = \sum_{i = 1}^{n}\sum_{j = 1}^{m}(c_{ij}-2\lambda)\Delta_{ij} \leq (M-2\lambda)(1-Z_{\lambda}).
\end{equation}

For any $\Theta\in\Pi(\widetilde a,\widetilde b)$, the marginal constraints and nonnegativity give
\begin{equation*}
  0 \leq \Theta_{ij} \leq \min\{\widetilde a_i,\widetilde b_j\} \leq 1.
\end{equation*}
Since $x(\log x-1) \in[-1,0]$ for $x\in[0,1]$, we have
\begin{equation*}
  -(n+1)(m+1) \leq E(\Theta) \leq 0.
\end{equation*}
In particular,
\begin{equation}\label{eq:entropy-estimate}
  E(\overline\Gamma)-E(\Gamma) \leq (n + 1)(m + 1).
\end{equation}
Since $\Gamma$ is a minimizer of \eqref{eq:epot} and $\overline\Gamma\in\Pi(\widetilde a,\widetilde b)$, we obtain
\begin{equation}\label{eq:mass-optimality}
  0 \leq \sum_{i = 1}^{n + 1}\sum_{j = 1}^{m + 1}\widetilde c^\lambda_{ij}(\overline\Gamma_{ij}-\Gamma_{ij}) + \eps\bigl(E(\overline\Gamma)-E(\Gamma)\bigr).
\end{equation}
Combining \eqref{eq:mass-linear-estimate}, \eqref{eq:entropy-estimate}, and \eqref{eq:mass-optimality} gives
\begin{equation*}
  0 \leq (M-2\lambda)(1-Z_{\lambda})+\eps(n+1)(m+1).
\end{equation*}
Therefore,
\begin{equation*}
  (2\lambda-M)(1-Z_{\lambda}) \leq \eps(n+1)(m+1),
\end{equation*}
which gives \eqref{eq:mass-rate}. Thus, the convergence \eqref{eq:zlambda-to-one} follows.
\end{proof}

We next compare the entropic partial value with the balanced entropic OT value. We continue with the notation of Proposition~\ref{prop:mass-convergence} and set
\begin{equation*}
  \rho_{\lambda}=1-Z_{\lambda}.
\end{equation*}

\begin{thm}\label{thm:value-error-estimate}
The following estimate holds: 
\begin{equation}\label{eq:value-error-estimate}
  0 \leq \OT_{\eps}(a,b)-\eps-\OT_{\eps,\lambda}(a,b) \leq M\rho_{\lambda} + \eps\rho_{\lambda}\left(2|\log\rho_{\lambda}| + \log(nm) + 2\right),
\end{equation}
where $\rho_{\lambda}|\log\rho_{\lambda}|$ is understood to be zero when $\rho_{\lambda}=0$. Consequently,
\begin{equation*}
  \OT_{\eps}(a,b)-\eps-\OT_{\eps,\lambda}(a,b) = O\left(\frac{\log\lambda}{\lambda}\right) \qquad\text{as }\lambda\to\infty.
\end{equation*}
\end{thm}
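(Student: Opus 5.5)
The proof has two halves: a lower bound obtained by embedding the balanced entropic optimizer into the extended problem, and an upper bound obtained by completing the partial optimizer $\Gamma=\Gamma^{\eps,\lambda}$ to a balanced plan as in the proof of Proposition~\ref{prop:mass-convergence}. Throughout, write $f(x)=x\log x$, so that $E(A)=\sum f(A_{ij})-\sum A_{ij}$.

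\emph{Lower bound.} Let $\omega^\eps$ be the optimizer of \eqref{eq:eot} and consider the block-diagonal plan $\Gamma'=\begin{pmatrix}\omega^\eps&0\\0^\top&1\end{pmatrix}\in\Pi(\widetilde a,\widetilde b)$.
\begin{itemize}
\item Its linear cost is $\sum_{i,j}(c_{ij}-2\lambda)\omega^\eps_{ij}=\sum_{i,j}c_{ij}\omega^\eps_{ij}-2\lambda$, and adding $+2\lambda$ cancels the shift.
\item Its entropy is $E(\omega^\eps)+1\cdot(\log 1-1)=E(\omega^\eps)-1$.
\end{itemize}
Hence the objective of \eqref{eq:epot} at $\Gamma'$ equals $\OT_\eps(a,b)-\eps$. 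Minimality of $\Gamma$ then gives $\OT_{\eps,\lambda}(a,b)\le\OT_\eps(a,b)-\eps$.

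\emph{Upper bound: decomposing the partial objective.} If $\rho_\lambda=0$, then $\Gamma$ is already block-diagonal and we are done, so assume $\rho:=\rho_\lambda>0$. Keep the notation $\Gamma^0$, $r$, $s$, $\Delta=rs^\top/\rho$ and $\omega:=\overline\Gamma^0=\Gamma^0+\Delta\in\Pi(a,b)$ from the proof of Proposition~\ref{prop:mass-convergence}. The last column sums to $1$ and $\sum_j s_j=\rho$, so $\Gamma_{n+1,m+1}=1-\rho$. Splitting the objective of \eqref{eq:epot} by blocks gives
\begin{equation*}
\OT_{\eps,\lambda}(a,b)=\langle C,\Gamma^0\rangle+2\lambda\rho+\eps\bigl[E(\Gamma^0)+E(r)+E(s)+(1-\rho)(\log(1-\rho)-1)\bigr].
\end{equation*}
On the other side, using $c_{ij}\le M$ and $\sum\Delta_{ij}=\rho$,
\begin{equation*}
\OT_\eps(a,b)\le\langle C,\omega\rangle+\eps E(\omega)\le\langle C,\Gamma^0\rangle+M\rho+\eps E(\omega).
\end{equation*}
Subtracting and dropping $-2\lambda\rho\le0$ leaves $\OT_\eps-\eps-\OT_{\eps,\lambda}\le M\rho+\eps D$, where
\begin{equation*}
D=E(\omega)-E(\Gamma^0)-E(r)-E(s)-(1-\rho)(\log(1-\rho)-1)-1 .
\end{equation*}

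\emph{Upper bound: the entropy comparison.} This is the main obstacle: we need $D$ to be of order $\rho|\log\rho|$ and not $O(1)$, so the loose bound \eqref{eq:entropy-estimate} cannot be used. I would handle the three pieces of $D$ separately.
\begin{itemize}
\item \emph{The term $E(\omega)-E(\Gamma^0)$.} All entries lie in $[0,1]$ and $f'(x)=\log x+1\le1$ there. By the mean value theorem, $f(\Gamma^0_{ij}+\Delta_{ij})-f(\Gamma^0_{ij})\le\Delta_{ij}$. Summing, and subtracting the linear parts, which differ by exactly $\rho$, gives $E(\omega)-E(\Gamma^0)\le\rho-\rho=0$.
\item \emph{The corner terms.} Since $-(1-\rho)\log(1-\rho)\le\rho$, we get $-(1-\rho)(\log(1-\rho)-1)-1=-(1-\rho)\log(1-\rho)-\rho\le0$.
\item \emph{The terms $-E(r)-E(s)$.} These equal $-\sum_i r_i\log r_i-\sum_j s_j\log s_j+2\rho$. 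Each sum is the Shannon entropy of a vector of total mass $\rho$. Writing $r=\rho p$ with $p$ a probability vector on $n$ points gives $-\sum_i r_i\log r_i=\rho H(p)-\rho\log\rho\le\rho\log n+\rho|\log\rho|$, and similarly for $s$ with $\log m$.
\end{itemize}
Altogether $D\le\rho\bigl(2|\log\rho|+\log(nm)+2\bigr)$, which is exactly \eqref{eq:value-error-estimate}.

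\emph{Asymptotics.} By \eqref{eq:mass-rate}, $\rho_\lambda\le\eps(n+1)(m+1)/(2\lambda-M)=O(1/\lambda)$. The map $x\mapsto x|\log x|$ is increasing on $(0,e^{-1})$, so for large $\lambda$ we get $\rho_\lambda|\log\rho_\lambda|=O(\lambda^{-1}\log\lambda)$. The remaining terms of \eqref{eq:value-error-estimate} are $O(1/\lambda)$, which yields the $O(\log\lambda/\lambda)$ rate.
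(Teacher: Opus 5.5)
Your proposal is correct and follows essentially the same route as the paper: the block-diagonal embedding of $\omega^\eps$ gives the lower bound, and for the upper bound you use the completion $\Gamma^0+rs^\top/\rho_\lambda$, the block decomposition of the EPOT objective, and the same entropy bounds on the dummy row, dummy column and corner entry. Your mean-value argument with $f'(x)=\log x+1\le 1$ is the paper's observation that $t\mapsto t(\log t-1)$ is decreasing on $[0,1]$, and bounding the corner term by $0$ instead of using $-Z_\lambda\log Z_\lambda\le\rho_\lambda$ only redistributes the same $2\rho_\lambda$. One small slip: the $s_j$ sit in the last \emph{row}, not the last column, but both sum to $\rho_\lambda$, so $\Gamma_{n+1,m+1}=1-\rho_\lambda$ still holds.
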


\begin{proof}
Let $\omega^\eps$ be the minimizer of \eqref{eq:eot}, and define
\begin{equation*}
  \widetilde\omega^\eps =
  \begin{pmatrix}
    \omega^\eps & 0\\
    0^\top & 1
  \end{pmatrix}.
\end{equation*}
Then $\widetilde\omega^\eps\in\Pi(\widetilde a,\widetilde b)$. Since $\omega^\eps$ has total mass one and $E(\widetilde\omega^\eps)=E(\omega^\eps)-1$, substituting $\widetilde\omega^\eps$ into \eqref{eq:epot} gives
\begin{equation*}
  \sum_{i = 1}^{n + 1}\sum_{j = 1}^{m + 1}\widetilde c^\lambda_{ij}\widetilde\omega^\eps_{ij} + \eps E(\widetilde\omega^\eps) + 2\lambda = \sum_{i = 1}^{n}\sum_{j = 1}^{m}c_{ij}\omega^\eps_{ij} + \eps E(\omega^\eps)-\eps.
\end{equation*}
Therefore,
\begin{equation}\label{eq:epot-upper-balanced}
  \OT_{\eps,\lambda}(a,b) \leq \OT_{\eps}(a,b)-\eps.
\end{equation}
This proves the first inequality in \eqref{eq:value-error-estimate}.

Let $\Gamma=\Gamma^{\eps,\lambda}$ and let $\Gamma^0$ be its $n\times m$ real--real block. If $\rho_{\lambda}=0$, then $\Gamma^0\in\Pi(a,b)$, all real--dummy and dummy--real entries vanish, and $\Gamma_{n+1,m+1}=1$. Hence,
\begin{equation*}
  \OT_{\eps,\lambda}(a,b) = \sum_{i = 1}^{n}\sum_{j = 1}^{m}c_{ij}\Gamma^0_{ij} + \eps E(\Gamma^0)-\eps \geq \OT_{\eps}(a,b)-\eps.
\end{equation*}
Together with \eqref{eq:epot-upper-balanced}, this gives equality. We may therefore assume that $\rho_{\lambda}>0$.
Set
\begin{equation*}
  r_i=a_i-\sum_{j=1}^{m}\Gamma^0_{ij},\qquad s_j=b_j-\sum_{i=1}^{n}\Gamma^0_{ij}.
\end{equation*}
The marginal constraints on $\Gamma$ imply
\begin{equation*}
  r_i=\Gamma_{i,m+1}\geq0,\qquad s_j=\Gamma_{n+1,j}\geq0,
\end{equation*}
and
\begin{equation*}
  \sum_{i=1}^{n}r_i=\sum_{j=1}^{m}s_j=\rho_{\lambda}.
\end{equation*}
Define
\begin{equation*}
  \Delta_{ij}=\frac{r_i s_j}{\rho_{\lambda}},\qquad \overline\omega_{ij}=\Gamma^0_{ij}+\Delta_{ij}.
\end{equation*}
Since
\begin{equation*}
  \sum_{j=1}^{m}\Delta_{ij}=r_i,\qquad \sum_{i=1}^{n}\Delta_{ij}=s_j,
\end{equation*}
the matrix $\overline\omega$ has row sums $a$ and column sums $b$. Thus,
\begin{equation*}
  \overline\omega\in\Pi(a,b).
\end{equation*}
Moreover,
\begin{equation*}
  \sum_{i = 1}^{n}\sum_{j = 1}^{m}\Delta_{ij} = \frac{\left(\sum_{i = 1}^{n}r_i\right)\left(\sum_{j = 1}^{m}s_j\right)}{\rho_{\lambda}} = \rho_{\lambda}.
\end{equation*}
It follows that
\begin{equation}\label{eq:value-cost-estimate}
  \sum_{i = 1}^{n}\sum_{j = 1}^{m}c_{ij}\overline\omega_{ij} = \sum_{i = 1}^{n}\sum_{j = 1}^{m}c_{ij}\Gamma^0_{ij} + \sum_{i = 1}^{n}\sum_{j = 1}^{m}c_{ij}\Delta_{ij} \leq \sum_{i = 1}^{n}\sum_{j = 1}^{m}c_{ij}\Gamma^0_{ij} + M\rho_{\lambda}.
\end{equation}

Let $f(t)=t(\log t-1)$. Since $f'(t)=\log t\leq0$ for $t\in(0,1]$, the function $f$ is decreasing on $[0,1]$. Furthermore,
\begin{equation*}
  0\leq\Gamma^0_{ij}\leq\overline\omega_{ij}\leq\min\{a_i,b_j\}\leq1.
\end{equation*}
Therefore,
\begin{equation}\label{eq:value-entropy-estimate}
  E(\overline\omega) \leq E(\Gamma^0).
\end{equation}
Since $\overline\omega\in\Pi(a,b)$, the definition of $\OT_{\eps}(a,b)$ gives
\begin{equation*}
  \OT_{\eps}(a,b) \leq \sum_{i = 1}^{n}\sum_{j = 1}^{m}c_{ij}\overline\omega_{ij} + \eps E(\overline\omega).
\end{equation*}
Combining this inequality with \eqref{eq:value-cost-estimate} and \eqref{eq:value-entropy-estimate}, we obtain
\begin{equation}\label{eq:balanced-value-estimate}
  \OT_{\eps}(a,b) \leq \sum_{i = 1}^{n}\sum_{j = 1}^{m}c_{ij}\Gamma^0_{ij} + M\rho_{\lambda} + \eps E(\Gamma^0).
\end{equation}
The marginal constraints on $\Gamma$ also give
\begin{equation*}
  \Gamma_{i,m+1}=r_i,\qquad \Gamma_{n+1,j}=s_j,\qquad \Gamma_{n+1,m+1}=Z_{\lambda}.
\end{equation*}
Define
\begin{equation*}
  E_{\mathrm{dum}}(\Gamma) = \sum_{i = 1}^{n}r_i(\log r_i-1) + \sum_{j = 1}^{m}s_j(\log s_j-1) + Z_{\lambda}(\log Z_{\lambda}-1).
\end{equation*}
Since the dummy costs vanish and the real--real mass of $\Gamma$ is $Z_{\lambda}=1-\rho_{\lambda}$, we have
\begin{equation}\label{eq:epot-block-decomposition}
  \OT_{\eps,\lambda}(a,b) = \sum_{i = 1}^{n}\sum_{j = 1}^{m}c_{ij}\Gamma^0_{ij} + \eps E(\Gamma^0) + \eps E_{\mathrm{dum}}(\Gamma) + 2\lambda\rho_{\lambda}.
\end{equation}
Combining \eqref{eq:balanced-value-estimate} and \eqref{eq:epot-block-decomposition} gives
\begin{equation*}
  \OT_{\eps}(a,b)-\eps-\OT_{\eps,\lambda}(a,b) \leq M\rho_{\lambda} + \eps\bigl(-E_{\mathrm{dum}}(\Gamma)-1\bigr) - 2\lambda\rho_{\lambda}.
\end{equation*}
Since $\lambda\rho_{\lambda}\geq0$, it follows that
\begin{equation}\label{eq:value-gap-entropy}
  \OT_{\eps}(a,b)-\eps-\OT_{\eps,\lambda}(a,b) \leq M\rho_{\lambda} + \eps\bigl(-E_{\mathrm{dum}}(\Gamma)-1\bigr).
\end{equation}
Using
\begin{equation*}
  \sum_{i=1}^{n}r_i=\sum_{j=1}^{m}s_j=\rho_{\lambda},\qquad Z_{\lambda}=1-\rho_{\lambda},
\end{equation*}
we obtain
\begin{equation}\label{eq:estimate-Edum1}
  -E_{\mathrm{dum}}(\Gamma)-1 = -\sum_{i = 1}^{n}r_i\log r_i - \sum_{j = 1}^{m}s_j\log s_j - Z_{\lambda}\log Z_{\lambda} + \rho_{\lambda} \geq 0.
\end{equation}
Since $(r_i/\rho_{\lambda})_{i=1}^{n}$ is a probability vector and its entropy is at most $\log n$, we obtain 
\begin{equation}\label{eq:estimate-entropy-r}
  -\sum_{i = 1}^{n}r_i\log r_i = -\rho_{\lambda}\log\rho_{\lambda} - \rho_{\lambda}\sum_{i = 1}^{n}\frac{r_i}{\rho_{\lambda}}\log\frac{r_i}{\rho_{\lambda}} \leq \rho_{\lambda}\log\frac{n}{\rho_{\lambda}}.
\end{equation}
Similarly,
\begin{equation}\label{eq:estimate-entropy-s}
  -\sum_{j = 1}^{m}s_j\log s_j \leq \rho_{\lambda}\log\frac{m}{\rho_{\lambda}}.
\end{equation}
Moreover, the inequality $-t\log t\leq1-t$ on $[0,1]$ gives
\begin{equation}\label{eq:estimate-entropy-Z}
  -Z_{\lambda}\log Z_{\lambda}\leq1-Z_{\lambda}=\rho_{\lambda}.
\end{equation}
Since $0<\rho_{\lambda}\leq1$, we have $-\log\rho_{\lambda}=|\log\rho_{\lambda}|$. Therefore, applying estimates \eqref{eq:estimate-entropy-r}, \eqref{eq:estimate-entropy-s}, and \eqref{eq:estimate-entropy-Z} to \eqref{eq:estimate-Edum1}, we obtain 
\begin{equation}\label{eq:dummy-entropy-estimate}
  0 \leq -E_{\mathrm{dum}}(\Gamma)-1 \leq \rho_{\lambda}\left(2|\log\rho_{\lambda}| + \log(nm) + 2\right).
\end{equation}
Combining \eqref{eq:value-gap-entropy} and \eqref{eq:dummy-entropy-estimate}, we prove \eqref{eq:value-error-estimate}.

Finally, Proposition~\ref{prop:mass-convergence} gives
\begin{equation}\label{eq:mass-estimate}
  0\leq\rho_{\lambda}\leq\frac{\eps(n+1)(m+1)}{2\lambda-M}
\end{equation}
for all sufficiently large $\lambda$. Hence,
\begin{equation*}
  \rho_{\lambda}=O\left(\frac{1}{\lambda}\right) \qquad\text{as }\lambda\to\infty.
\end{equation*}
For sufficiently large $\lambda$, the right-hand side of \eqref{eq:mass-estimate} is less than $e^{-1}$. Since $t|\log t|$ is increasing on $[0,e^{-1}]$, we also have
\begin{equation*}
  \rho_{\lambda}|\log\rho_{\lambda}|=O\left(\frac{\log\lambda}{\lambda}\right) \qquad\text{as }\lambda\to\infty.
\end{equation*}
The asymptotic estimate follows from \eqref{eq:value-error-estimate}.
\end{proof}

\begin{cor}\label{cor:value-convergence}
For every fixed $\eps>0$,
\begin{equation}\label{eq:value-convergence}
  \lim_{\lambda\to\infty}\OT_{\eps,\lambda}(a,b) = \OT_{\eps}(a,b)-\eps.
\end{equation}
\end{cor}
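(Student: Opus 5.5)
The corollary follows from Theorem~\ref{thm:value-error-estimate} combined with Proposition~\ref{prop:mass-convergence}. My plan is a squeeze argument on the gap
\begin{equation*}
  g(\lambda)=\OT_{\eps}(a,b)-\eps-\OT_{\eps,\lambda}(a,b).
\end{equation*}
By \eqref{eq:value-error-estimate}, $g(\lambda)\geq0$ for every $\lambda>0$. It is also bounded above by
\begin{equation*}
  h(\rho_\lambda)=M\rho_\lambda+\eps\rho_\lambda\bigl(2|\log\rho_\lambda|+\log(nm)+2\bigr),
\end{equation*}
with the convention $h(0)=0$. It therefore suffices to show that $h(\rho_\lambda)\to0$ as $\lambda\to\infty$.

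For that I will use the mass estimate \eqref{eq:mass-rate}. For $2\lambda>M$ it gives
\begin{equation*}
  0\leq\rho_\lambda\leq\frac{\eps(n+1)(m+1)}{2\lambda-M},
\end{equation*}
so $\rho_\lambda\to0$. The terms $M\rho_\lambda$ and $\eps\rho_\lambda(\log(nm)+2)$ then vanish trivially.

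The only term needing care is $\rho_\lambda|\log\rho_\lambda|$. The function $t\mapsto t|\log t|$, extended by $0$ at $t=0$, is continuous on $[0,1]$. Hence $\rho_\lambda|\log\rho_\lambda|\to0$ whenever $\rho_\lambda\to0$. Alternatively, I can cite the monotonicity of $t|\log t|$ on $[0,e^{-1}]$, exactly as at the end of the proof of Theorem~\ref{thm:value-error-estimate}. That route even gives the quantitative rate $O(\log\lambda/\lambda)$, which already appears in the theorem and directly implies the limit.

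Combining these, $0\leq g(\lambda)\leq h(\rho_\lambda)\to0$, which yields \eqref{eq:value-convergence}. There is no real obstacle here. The only point to note is the convention at $\rho_\lambda=0$; in that case the proof of Theorem~\ref{thm:value-error-estimate} already gives $g(\lambda)=0$ exactly.
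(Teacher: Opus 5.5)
Your proposal is correct and follows essentially the same route as the paper: squeeze the gap $\OT_{\eps}(a,b)-\eps-\OT_{\eps,\lambda}(a,b)$ between $0$ and the upper bound in \eqref{eq:value-error-estimate}, which tends to zero because $\rho_\lambda\to0$ by Proposition~\ref{prop:mass-convergence}. Your remark that $t|\log t|$ extends continuously by $0$ at $t=0$ simply makes explicit a step the paper leaves implicit.
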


\begin{proof}
By Theorem~\ref{thm:value-error-estimate},
\begin{equation*}
  0 \leq \OT_{\eps}(a,b) - \OT_{\eps,\lambda}(a,b) -\eps \leq M\rho_{\lambda}+\eps\rho_{\lambda}\left(2|\log\rho_{\lambda}|+\log(nm)+2\right).
\end{equation*}
Proposition~\ref{prop:mass-convergence} gives $\rho_{\lambda}\to0$, and hence the right-hand side converges to zero.
\end{proof}

\begin{cor}\label{cor:plan-convergence}
Let $\Gamma^{\eps,\lambda}$ be the minimizer of \eqref{eq:epot}, and let $\omega^\eps$ be the unique minimizer of \eqref{eq:eot}. Then
\begin{equation*}
  \left\|\Gamma^{\eps,\lambda}|_{\X\times\Y}-\omega^\eps\right\|_{\ell^1}\to0
  \qquad\text{as }\lambda\to\infty.
\end{equation*}
\end{cor}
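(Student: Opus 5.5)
The plan is to combine the completion construction from the proof of Theorem~\ref{thm:value-error-estimate} with strong convexity of the balanced entropic objective on $\Pi(a,b)$. Write $\Gamma=\Gamma^{\eps,\lambda}$, $\Gamma^0=\Gamma|_{\X\times\Y}$, and $\rho_\lambda=1-Z_\lambda$.

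\textbf{Step 1: a nearby balanced plan.} If $\rho_\lambda>0$, let $\overline\omega=\Gamma^0+\Delta$ with $\Delta_{ij}=r_is_j/\rho_\lambda$, as in the proof of Theorem~\ref{thm:value-error-estimate}. Then $\overline\omega\in\Pi(a,b)$ and
\begin{equation*}
\|\overline\omega-\Gamma^0\|_{\ell^1}=\sum_{i,j}\Delta_{ij}=\rho_\lambda .
\end{equation*}
If $\rho_\lambda=0$, set $\overline\omega=\Gamma^0$. By Proposition~\ref{prop:mass-convergence}, $\rho_\lambda\to0$. It therefore suffices to show $\|\overline\omega-\omega^\eps\|_{\ell^1}\to0$.

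\textbf{Step 2: $\overline\omega$ is nearly optimal for \eqref{eq:eot}.} Let $F(\omega)=\sum c_{ij}\omega_{ij}+\eps E(\omega)$. Inequality \eqref{eq:balanced-value-estimate} gives
\begin{equation*}
F(\overline\omega)\le \sum c_{ij}\Gamma^0_{ij}+\eps E(\Gamma^0)+M\rho_\lambda .
\end{equation*}
From the block decomposition \eqref{eq:epot-block-decomposition}, together with $\lambda\rho_\lambda\ge0$ and $-E_{\mathrm{dum}}(\Gamma)-1\ge0$, we get
\begin{equation*}
\sum c_{ij}\Gamma^0_{ij}+\eps E(\Gamma^0)\le \OT_{\eps,\lambda}(a,b)+\eps\bigl(-E_{\mathrm{dum}}(\Gamma)-1\bigr)+\eps .
\end{equation*}
Use the first inequality of \eqref{eq:value-error-estimate} and the bound \eqref{eq:dummy-entropy-estimate}. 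This yields
\begin{equation*}
0\le F(\overline\omega)-F(\omega^\eps)\le M\rho_\lambda+\eps\rho_\lambda\bigl(2|\log\rho_\lambda|+\log(nm)+2\bigr)=:\delta_\lambda,
\end{equation*}
and $\delta_\lambda\to0$. This step is just bookkeeping of inequalities already proved in the paper.

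\textbf{Step 3: from near-optimal value to near-optimal plan.} I expect this to be the main step. I would use strong convexity of $E$ on the simplex of mass-one matrices. The Hessian of $E$ is $\mathrm{diag}(1/\omega_{ij})$, so by Pinsker's inequality the Bregman divergence of $E$ between two probability matrices is their KL divergence, which is at least $\tfrac12\|\cdot\|_{\ell^1}^2$. Since $\omega^\eps$ minimizes the convex function $F$ over the convex set $\Pi(a,b)$, the first-order optimality condition gives
\begin{equation*}
F(\overline\omega)-F(\omega^\eps)\ge \eps\,\mathrm{KL}(\overline\omega\,\|\,\omega^\eps).
\end{equation*}
To justify this, note that $\omega^\eps$ has strictly positive entries (Sinkhorn form $\omega^\eps_{ij}=u_ie^{-c_{ij}/\eps}v_j$). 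Hence $F$ is differentiable at $\omega^\eps$ and $\langle\nabla F(\omega^\eps),\overline\omega-\omega^\eps\rangle\ge0$. The remainder $F(\overline\omega)-F(\omega^\eps)-\langle\nabla F(\omega^\eps),\overline\omega-\omega^\eps\rangle$ equals $\eps\,\mathrm{KL}(\overline\omega\|\omega^\eps)$, using that both matrices have total mass one. Pinsker then gives
\begin{equation*}
\|\overline\omega-\omega^\eps\|_{\ell^1}\le\sqrt{2\delta_\lambda/\eps}.
\end{equation*}

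\textbf{Alternative for Step 3.} If one prefers to avoid the gradient computation, argue by compactness. Any limit point $\omega^*$ of $\overline\omega$ lies in $\Pi(a,b)$. By continuity of $F$, it satisfies $F(\omega^*)=F(\omega^\eps)$. Uniqueness of the minimizer then forces $\omega^*=\omega^\eps$.

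\textbf{Conclusion.} By the triangle inequality, $\|\Gamma^0-\omega^\eps\|_{\ell^1}\le\rho_\lambda+\sqrt{2\delta_\lambda/\eps}\to0$. In particular, the rate is $O(\sqrt{\log\lambda/\lambda})$.
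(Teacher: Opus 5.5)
Your proof is correct, but it takes a different route from the paper. The paper argues purely by compactness. It takes a cluster point of the real--real blocks $\Gamma^0_k$ and checks that its marginals are $a$ and $b$, since the dummy entries are bounded by $\rho_{\lambda_k}\to0$. It then combines the block decomposition \eqref{eq:epot-block-decomposition}, the convergence $E_{\mathrm{dum}}(\Gamma_k)\to-1$, and Corollary~\ref{cor:value-convergence} to show that the cluster point attains $\OT_\eps(a,b)$. Uniqueness then identifies it with $\omega^\eps$.

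You instead pass first to the completed plan $\overline\omega\in\Pi(a,b)$, which lies within $\rho_\lambda$ of $\Gamma^0$ in $\ell^1$, so feasibility is automatic rather than something to check in the limit. Your bookkeeping in Step 2 correctly gives $0\le F(\overline\omega)-F(\omega^\eps)\le\delta_\lambda$. Strictly speaking, it is \eqref{eq:value-cost-estimate} and \eqref{eq:value-entropy-estimate}, not \eqref{eq:balanced-value-estimate} itself, that bound $F(\overline\omega)$. Your ``alternative'' Step 3 is then essentially the paper's argument. Your main Step 3 instead turns near-optimality into closeness through the identity $F(\overline\omega)-F(\omega^\eps)-\langle\nabla F(\omega^\eps),\overline\omega-\omega^\eps\rangle=\eps\,\mathrm{KL}(\overline\omega\,\|\,\omega^\eps)$ together with Pinsker's inequality.

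The two routes trade off as follows. The paper's argument is shorter and needs only continuity of $F$, compactness, and uniqueness of $\omega^\eps$. Yours additionally needs strict positivity of $\omega^\eps$, so that $F$ is differentiable there and the first-order condition applies. This is standard (from the Sinkhorn scaling form, or from the infinite slope of $t\log t$ at $0$), but the paper does not prove it. In return you obtain the explicit bound
\begin{equation*}
\|\Gamma^{\eps,\lambda}|_{\X\times\Y}-\omega^\eps\|_{\ell^1}\le\rho_\lambda+\sqrt{2\delta_\lambda/\eps}=O\bigl(\sqrt{\log\lambda/\lambda}\bigr).
\end{equation*}
This is a genuine quantitative strengthening of the corollary, in line with the rates in Proposition~\ref{prop:mass-convergence} and Theorem~\ref{thm:value-error-estimate}.
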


\begin{proof}
Let $\lambda_k\to\infty$ be an arbitrary sequence, set $\Gamma_k=\Gamma^{\eps,\lambda_k}$, and let $\Gamma_k^0$ be the $n\times m$ real--real block of $\Gamma_k$. We show that
\begin{equation*}
  \Gamma_k^0\to\omega^\eps
  \qquad\text{in }\ell^1.
\end{equation*}
Since $\Gamma_k\in\Pi(\widetilde a,\widetilde b)$, the marginal constraints and nonnegativity give
\begin{equation*}
  0\leq(\Gamma_k^0)_{ij}\leq\min\{a_i,b_j\}\leq1.
\end{equation*}
Hence, $(\Gamma_k^0)_k$ is contained in the compact set $[0,1]^{n\times m}$. Let $\overline\omega$ be an arbitrary cluster point of $(\Gamma_k^0)_k$. After passing to a subsequence, which we do not relabel, we have
\begin{equation*}
  \Gamma_k^0\to\overline\omega
  \qquad\text{in }\ell^1.
\end{equation*}

We first show that $\overline\omega$ has marginals $a$ and $b$. For every $i$ and $j$, the marginal constraints on $\Gamma_k$ give
\begin{equation*}
  a_i-\sum_{\ell=1}^{m}(\Gamma_k^0)_{i\ell}=(\Gamma_k)_{i,m+1}\geq0,\qquad b_j-\sum_{\ell=1}^{n}(\Gamma_k^0)_{\ell j}=(\Gamma_k)_{n+1,j}\geq0.
\end{equation*}
Moreover,
\begin{equation*}
  \sum_{i=1}^{n}(\Gamma_k)_{i,m+1}=\sum_{j=1}^{m}(\Gamma_k)_{n+1,j}=\rho_{\lambda_k}.
\end{equation*}
Since all these entries are nonnegative,
\begin{equation*}
  0\leq(\Gamma_k)_{i,m+1}\leq\rho_{\lambda_k},
  \qquad
  0\leq(\Gamma_k)_{n+1,j}\leq\rho_{\lambda_k}.
\end{equation*}
By Proposition~\ref{prop:mass-convergence}, $\rho_{\lambda_k}\to0$. Passing to the limit in the marginal identities, we obtain
\begin{equation*}
  \sum_{j=1}^{m}\overline\omega_{ij}=a_i,
  \qquad
  \sum_{i=1}^{n}\overline\omega_{ij}=b_j.
\end{equation*}
Therefore,
\begin{equation*}
  \overline\omega\in\Pi(a,b).
\end{equation*}

We next show that $\overline\omega$ minimizes the balanced entropic OT problem. We define
\begin{equation*}
  F(\omega)=\sum_{i=1}^{n}\sum_{j=1}^{m}c_{ij}\omega_{ij}+\eps E(\omega).
\end{equation*}
Since $t\mapsto t(\log t-1)$ is continuous on $[0,1]$, the convergence of $\Gamma_k^0$ gives
\begin{equation*}
  F(\Gamma_k^0)\to F(\overline\omega).
\end{equation*}

By the estimate \eqref{eq:dummy-entropy-estimate}, we have
\begin{equation*}
  0 \leq -E_{\mathrm{dum}}(\Gamma_k)-1 \leq \rho_{\lambda_k}\left(2|\log\rho_{\lambda_k}|+\log(nm)+2\right).
\end{equation*}
Since $\rho_{\lambda_k}\to0$ and $\rho_{\lambda_k}|\log\rho_{\lambda_k}|\to0$, it follows that
\begin{equation}\label{eq:dummy-entropy-convergence}
  E_{\mathrm{dum}}(\Gamma_k)\to-1.
\end{equation}

By the block decomposition \eqref{eq:epot-block-decomposition},
\begin{equation*}
  \OT_{\eps,\lambda_k}(a,b) = F(\Gamma_k^0) + \eps E_{\mathrm{dum}}(\Gamma_k) + 2\lambda_k\rho_{\lambda_k}.
\end{equation*}
Since the last term is nonnegative, we have
\begin{equation*}
  F(\Gamma_k^0)+\eps E_{\mathrm{dum}}(\Gamma_k) \leq \OT_{\eps,\lambda_k}(a,b).
\end{equation*}
Passing to the limit, using the convergence of $F(\Gamma_k^0)$, \eqref{eq:dummy-entropy-convergence}, and Corollary~\ref{cor:value-convergence}, we obtain
\begin{equation*}
  F(\overline\omega)-\eps \leq \OT_{\eps}(a,b)-\eps.
\end{equation*}
Thus, we have 
\begin{equation}\label{eq:F-inequality1}
  F(\overline\omega)\leq\OT_{\eps}(a,b).
\end{equation}
On the other hand, since $\overline\omega\in\Pi(a,b)$, the definition of $\OT_{\eps}(a,b)$ gives
\begin{equation}\label{eq:F-inequality2}
  \OT_{\eps}(a,b)\leq F(\overline\omega).
\end{equation}
Therefore, combining inequalities \eqref{eq:F-inequality1} and \eqref{eq:F-inequality2}, we obtain 
\begin{equation*}
  F(\overline\omega)=\OT_{\eps}(a,b).
\end{equation*}
Hence, $\overline\omega$ is a minimizer of \eqref{eq:eot}. Since $\omega^\eps$ is the unique minimizer of \eqref{eq:eot}, we conclude that
\begin{equation*}
  \overline\omega=\omega^\eps.
\end{equation*}

Since $\overline\omega$ was an arbitrary cluster point, $\omega^\eps$ is the unique cluster point of $(\Gamma_k^0)_k$. Together with the compactness of $[0,1]^{n\times m}$, this implies
\begin{equation*}
  \Gamma_k^0\to\omega^\eps
  \qquad\text{in }\ell^1.
\end{equation*}
Since the sequence $\lambda_k\to\infty$ was arbitrary, the result follows.
\end{proof}

\section{EPOT between Gaussian Mixture Models}\label{sec:gmm-epot}
We now specialize the finite theory to Gaussian mixture models (GMMs). Let
\begin{equation}\label{eq:gmm-pair}
  \mu = \sum_{k=1}^{K}a_k\mu_k,\qquad \nu = \sum_{l=1}^{L}b_l\nu_l
\end{equation}
be Gaussian mixtures on $\RR^d$, where $a_k>0$ and $b_l>0$, and the weights satisfy
\begin{equation*}
  \sum_{k=1}^{K}a_k=\sum_{l=1}^{L}b_l=1.
\end{equation*}
The measures $\mu_k$ and $\nu_l$ are Gaussian probability measures. We write
\begin{equation*}
  a=(a_1,\ldots,a_K),\qquad b=(b_1,\ldots,b_L)
\end{equation*}
for the corresponding weight vectors. If
\begin{equation*}
  \mu_k=\mathcal N(m_{0,k},\Sigma_{0,k}),\qquad
  \nu_l=\mathcal N(m_{1,l},\Sigma_{1,l}),
\end{equation*}
then the squared $2$-Wasserstein distance between $\mu_k$ and $\nu_l$ is given by the classical closed formula \eqref{eq:intro-gaussian-w2}. We use $W_2^2(\mu_k,\nu_l)$ as the cost between the $k$-th source component and the $l$-th target component in the finite EPOT problem \eqref{eq:epot}.

\subsection{Mixture Wasserstein distance and its entropic partial version}\label{subsec:mw}
For the GMMs $\mu$ and $\nu$ in \eqref{eq:gmm-pair}, Delon and Desolneux define a Wasserstein-type distance by restricting continuous transport plans to Gaussian-mixture plans \cite{delon-2020}. In the squared Euclidean case, the continuous formulation \eqref{eq:intro-mw-continuous} is equivalent to the finite-dimensional problem
\begin{equation}\label{eq:mw-discrete}
  \MW_2^2(\mu,\nu)=\min_{\omega\in\Pi(a,b)} \sum_{k=1}^{K}\sum_{l=1}^{L}W_2^2(\mu_k,\nu_l)\omega_{kl}.
\end{equation}
Thus, the continuous mixture transport problem reduces to OT between the mixture weights, with squared Gaussian Wasserstein distances as component costs.

We define the Gaussian component cost matrix by
\begin{equation}\label{eq:gmm-component-cost}
  C_G=(c_{kl})\in\RR_{\geq0}^{K\times L},
  \qquad
  c_{kl}=W_2^2(\mu_k,\nu_l).
\end{equation}
Applying the finite EPOT problem \eqref{eq:epot} to the weight vectors $a$ and $b$ with the cost matrix \eqref{eq:gmm-component-cost}, we define
\begin{equation}\label{eq:gmm-epot-value}
  d_G^{\eps,\lambda}(\mu,\nu)=\OT_{\eps,\lambda}(a,b;C_G).
\end{equation}
We call \eqref{eq:gmm-epot-value} the entropic partial Gaussian mixture OT. The subscript $G$ indicates that the ground cost is the squared Wasserstein distance between Gaussian components.

To define the continuous plans induced by component couplings, assume that all covariance matrices are positive definite. For each $k$, let $p_{\mu_k}$ denote the Gaussian density of $\mu_k$ with respect to the Lebesgue measure. For each pair $(k,l)$, the optimal transport map from $\mu_k$ to $\nu_l$ is the affine map
\begin{equation}\label{eq:gaussian-optimal-map}
  T_{kl}(x)=m_{1,l}+A_{kl}(x-m_{0,k}),\qquad A_{kl}=\Sigma_{0,k}^{-1/2}\left(\Sigma_{0,k}^{1/2}\Sigma_{1,l}\Sigma_{0,k}^{1/2}\right)^{1/2}\Sigma_{0,k}^{-1/2}.
\end{equation}
For any $\omega\in\Pi_{\leq}(a,b)$, we define the induced continuous transport plan by
\begin{equation}\label{eq:gmm-induced-plan}
  \gamma_{\omega}=\sum_{k=1}^{K}\sum_{l=1}^{L}\omega_{kl}(\Id,T_{kl})_\#\mu_k.
\end{equation}
Formally, this plan can be written as
\begin{equation}\label{eq:gmm-induced-plan-formal}
  \gamma_{\omega}(x,y) = \sum_{k=1}^{K}\sum_{l=1}^{L}\omega_{kl}p_{\mu_k}(x)\delta_{y=T_{kl}(x)}.
\end{equation}
Its marginals are
\begin{equation*}
(\pi_1)_\#\gamma_{\omega}=\sum_{k=1}^{K}\left(\sum_{l=1}^{L}\omega_{kl}\right)\mu_k\leq\mu,\qquad (\pi_2)_\#\gamma_{\omega}=\sum_{l=1}^{L}\left(\sum_{k=1}^{K}\omega_{kl}\right)\nu_l\leq\nu.
\end{equation*}
Thus, $\gamma_{\omega}\in\Pi_{\leq}(\mu,\nu)$. In particular, if $\omega\in\Pi(a,b)$, then $\gamma_{\omega}\in\Pi(\mu,\nu)$. For every $\varphi\in C_b(\RR^d\times\RR^d)$, the definition \eqref{eq:gmm-induced-plan} gives
\begin{equation}\label{eq:gmm-induced-plan-test}
  \int_{\RR^d\times\RR^d}\varphi(x,y)\dd\gamma_{\omega}(x,y)=\sum_{k=1}^{K}\sum_{l=1}^{L}\omega_{kl}\int_{\RR^d}\varphi(x,T_{kl}(x))\dd\mu_k(x).
\end{equation}

\begin{thm}\label{thm:gmm-narrow-lambda}
Fix $\eps>0$. Let $\omega^{\eps,\lambda}$ be the real--real block of the EPOT minimizer for the weight vectors $a$ and $b$ with the cost matrix $C_G$, and let $\omega^\eps$ be the unique minimizer of the balanced entropic OT problem with the same cost matrix. We define
\begin{equation}\label{eq:gamma-eps-lambda}
  \gamma^{\eps,\lambda}=\gamma_{\omega^{\eps,\lambda}},\qquad \gamma^\eps=\gamma_{\omega^\eps}.
\end{equation}
Then $\gamma^{\eps,\lambda}$ converges narrowly to $\gamma^\eps$ as $\lambda\to\infty$. More precisely, for every $\varphi\in C_b(\RR^d\times\RR^d)$,
\begin{equation}\label{eq:gmm-narrow-convergence}
  \lim_{\lambda\to\infty}
  \int_{\RR^d\times\RR^d}\varphi(x,y)\dd\gamma^{\eps,\lambda}(x,y) = \int_{\RR^d\times\RR^d}\varphi(x,y)\dd\gamma^\eps(x,y).
\end{equation}
\end{thm}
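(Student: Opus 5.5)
The argument reduces the narrow convergence to the finite-dimensional convergence of component couplings in Corollary~\ref{cor:plan-convergence}, using the test-function identity \eqref{eq:gmm-induced-plan-test}. First I observe that, for fixed $\eps>0$, the matrix $\omega^{\eps,\lambda}$ is exactly $\Gamma^{\eps,\lambda}|_{\X\times\Y}$ for the finite EPOT problem with $n=K$, $m=L$ and cost $C_G$. The cost matrix is finite, with entries $W_2^2(\mu_k,\nu_l)<\infty$ by \eqref{eq:intro-gaussian-w2}, so all hypotheses of Section~\ref{sec:epot} hold. Corollary~\ref{cor:plan-convergence} therefore gives
\begin{equation*}
  \|\omega^{\eps,\lambda}-\omega^\eps\|_{\ell^1}\to0\qquad\text{as }\lambda\to\infty.
\end{equation*}
Both matrices lie in $\Pi_{\leq}(a,b)$, so $\gamma^{\eps,\lambda}$ and $\gamma^\eps$ are well defined through \eqref{eq:gmm-induced-plan}.

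Next I fix $\varphi\in C_b(\RR^d\times\RR^d)$ and set
\begin{equation*}
  I_{kl}(\varphi)=\int_{\RR^d}\varphi(x,T_{kl}(x))\dd\mu_k(x).
\end{equation*}
Each $T_{kl}$ is affine, hence continuous and measurable, so $x\mapsto\varphi(x,T_{kl}(x))$ is a bounded measurable function. Since $\mu_k$ is a probability measure, this gives $|I_{kl}(\varphi)|\leq\|\varphi\|_\infty$. The quantity $I_{kl}(\varphi)$ does not depend on $\lambda$. By \eqref{eq:gmm-induced-plan-test} applied to both plans and subtracting,
\begin{equation*}
  \left|\int\varphi\dd\gamma^{\eps,\lambda}-\int\varphi\dd\gamma^\eps\right|
  =\left|\sum_{k,l}(\omega^{\eps,\lambda}_{kl}-\omega^\eps_{kl})I_{kl}(\varphi)\right|
  \leq\|\varphi\|_\infty\,\|\omega^{\eps,\lambda}-\omega^\eps\|_{\ell^1}.
\end{equation*}
The right-hand side tends to zero by the first step, which proves \eqref{eq:gmm-narrow-convergence}.

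The only point needing care is that these are nonnegative finite measures and not necessarily probability measures, since $|\gamma^{\eps,\lambda}|=Z_\lambda\leq1$. Narrow convergence here is understood as convergence against all of $C_b$, which is exactly what is shown. Taking $\varphi\equiv1$, the estimate also gives convergence of total masses, $Z_\lambda\to1=|\gamma^\eps|$, consistent with Proposition~\ref{prop:mass-convergence}. I do not expect a real obstacle: the substantive work is done in Corollary~\ref{cor:plan-convergence}, and the rest is the linearity of $\omega\mapsto\gamma_\omega$ combined with a uniform bound on the finitely many $I_{kl}(\varphi)$. The estimate moreover yields the quantitative bound in total variation, $\|\gamma^{\eps,\lambda}-\gamma^\eps\|_{TV}\leq\|\omega^{\eps,\lambda}-\omega^\eps\|_{\ell^1}$.
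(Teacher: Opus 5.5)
Your proposal is correct and follows essentially the same route as the paper. Like the paper, you apply Corollary~\ref{cor:plan-convergence} to get $\ell^1$ convergence of the component couplings, then transfer it to $C_b$ test functions via \eqref{eq:gmm-induced-plan-test} and the uniform bound $|I_{kl}(\varphi)|\leq\|\varphi\|_\infty$. Your closing remarks on total-mass convergence and the total-variation bound are valid bonuses that the paper does not state.
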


\begin{proof}
Let $\varphi\in C_b(\RR^d\times\RR^d)$. By \eqref{eq:gmm-induced-plan-test},
\begin{equation*}
  \int_{\RR^d\times\RR^d}\varphi\dd\gamma^{\eps,\lambda}-\int_{\RR^d\times\RR^d}\varphi\dd\gamma^\eps=\sum_{k=1}^{K}\sum_{l=1}^{L}\left(\omega^{\eps,\lambda}_{kl}-\omega^\eps_{kl}\right)\int_{\RR^d}\varphi(x,T_{kl}(x))\dd\mu_k(x).
\end{equation*}
Since $\varphi$ is bounded,
\begin{equation*}
  \left|
  \int_{\RR^d}\varphi(x,T_{kl}(x))\dd\mu_k(x)
  \right|
  \leq
  \|\varphi\|_{\infty}.
\end{equation*}
Therefore,
\begin{equation*}
  \left|\int_{\RR^d\times\RR^d}\varphi\dd\gamma^{\eps,\lambda}-\int_{\RR^d\times\RR^d}\varphi\dd\gamma^\eps\right|\leq \|\varphi\|_{\infty}\left\|\omega^{\eps,\lambda}-\omega^\eps\right\|_{\ell^1}.
\end{equation*}
By Corollary~\ref{cor:plan-convergence},
\begin{equation*}
  \left\|\omega^{\eps,\lambda}-\omega^\eps\right\|_{\ell^1}\to0\qquad\text{as }\lambda\to\infty.
\end{equation*}
Hence, \eqref{eq:gmm-narrow-convergence} follows.
\end{proof}

The next proposition gives the entropy-selection property for the transport polytope $\Pi(a,b)$. The same selection principle appears in \cite[Proposition~4.1]{cominetti1994asymptotic}. We include a direct proof for completeness.

\begin{prop}\label{prop:entropy-selection}
Let $C\in\RR^{K\times L}$. For each $\eps>0$, let $\omega^\eps$ be the unique minimizer of
\begin{equation}\label{eq:entropy-regularized-linear-problem}
  \min_{\omega\in\Pi(a,b)}
  \left\{
  \langle C,\omega\rangle+\eps E(\omega)
  \right\},
\end{equation}
and define the set of minimizers of the unregularized problem by
\begin{equation}\label{eq:unregularized-optimal-set}
  \mathcal S=\argmin_{\omega\in\Pi(a,b)}\langle C,\omega\rangle.
\end{equation}
Then there exists a unique maximum-entropy optimizer $\omega^*\in\mathcal S$, characterized by
\begin{equation*}
  \omega^*=\argmax_{\omega\in\mathcal S}\bigl(-E(\omega)\bigr).
\end{equation*}
Moreover,
\begin{equation}\label{eq:entropy-selection-limit}
  \omega^\eps\to\omega^*
  \qquad\text{as }\eps\to0.
\end{equation}
\end{prop}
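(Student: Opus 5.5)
Existence and uniqueness of $\omega^*$ are immediate. The set $\mathcal S$ is the set of minimizers of a linear functional over the nonempty compact polytope $\Pi(a,b)$, so it is a nonempty compact convex face of $\Pi(a,b)$. The function $E$ is continuous and strictly convex on $[0,1]^{K\times L}$, so $-E$ attains a unique maximum on $\mathcal S$; call it $\omega^*$. Write $c^*=\min_{\Pi(a,b)}\langle C,\omega\rangle$.

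For the convergence \eqref{eq:entropy-selection-limit} I would use a compactness/cluster-point argument, as in Corollary~\ref{cor:plan-convergence}. Take any sequence $\eps_k\to0$. The $\omega^{\eps_k}$ lie in the compact set $\Pi(a,b)$, so let $\overline\omega$ be any cluster point and pass to a subsequence with $\omega^{\eps_k}\to\overline\omega$.

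\emph{Step 1: $\overline\omega\in\mathcal S$.} Compare with $\omega^*$ using the optimality of $\omega^{\eps_k}$:
\begin{equation*}
  \langle C,\omega^{\eps_k}\rangle+\eps_k E(\omega^{\eps_k})\leq c^*+\eps_k E(\omega^*).
\end{equation*}
As in the proof of Proposition~\ref{prop:mass-convergence}, $E$ is bounded on $\Pi(a,b)$, with $-KL\leq E\leq 0$. Hence $\langle C,\omega^{\eps_k}\rangle\leq c^*+\eps_k KL$. Letting $k\to\infty$ gives $\langle C,\overline\omega\rangle\leq c^*$, so $\overline\omega\in\mathcal S$.

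\emph{Step 2: $\overline\omega$ maximizes entropy on $\mathcal S$.} Since $\langle C,\omega^{\eps_k}\rangle\geq c^*=\langle C,\omega^*\rangle$, the displayed inequality yields, after dividing by $\eps_k>0$,
\begin{equation*}
  E(\omega^{\eps_k})\leq E(\omega^*).
\end{equation*}
By continuity of $E$ on $[0,1]^{K\times L}$ (with the convention $0\log0=0$), $E(\overline\omega)\leq E(\omega^*)$. Since $\overline\omega\in\mathcal S$, uniqueness of the maximizer of $-E$ on $\mathcal S$ forces $\overline\omega=\omega^*$.

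Every cluster point therefore equals $\omega^*$. Compactness then gives convergence of the whole sequence, and since the sequence $\eps_k\to0$ was arbitrary, \eqref{eq:entropy-selection-limit} follows.

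I do not expect a serious obstacle. The only delicate point is Step 2: the linear-cost gap must be discarded with the correct sign, using $\langle C,\omega^{\eps_k}\rangle\geq c^*$, before dividing by $\eps_k$. This sign argument replaces any rate estimate. The other point to watch is continuity of $E$ up to the boundary of the polytope, where entries vanish; this holds because $t\mapsto t(\log t-1)$ is continuous on $[0,1]$.
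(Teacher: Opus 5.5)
Your proposal is correct and follows essentially the same cluster-point argument as the paper. The only difference is cosmetic: in Step 1 you compare with $\omega^*$ and use the explicit bound $-KL\le E\le 0$, where the paper compares with an arbitrary $\eta\in\Pi(a,b)$. Step 2 is the same sign argument followed by uniqueness of the entropy maximizer on $\mathcal S$.
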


\begin{proof}
The set $\Pi(a,b)$ is compact and convex. Since the linear functional $\omega\mapsto\langle C,\omega\rangle$ is continuous, the set $\mathcal S$ in \eqref{eq:unregularized-optimal-set} is nonempty, compact, and convex. Since $E$ is continuous and strictly convex, it has a unique minimizer $\omega^*$ on $\mathcal S$. Equivalently, $\omega^*$ is the unique maximizer of $-E$ on $\mathcal S$.

Let $\overline\omega$ be an arbitrary cluster point of $\omega^\eps$ as $\eps\to0$. Then there exists a sequence $\eps_r\to0$ such that
\begin{equation*}
  \omega^{\eps_r}\to\overline\omega.
\end{equation*}
For any $\eta\in\Pi(a,b)$, the optimality of $\omega^{\eps_r}$ in \eqref{eq:entropy-regularized-linear-problem} gives
\begin{equation*}
  \langle C,\omega^{\eps_r}\rangle+\eps_r E(\omega^{\eps_r})\leq \langle C,\eta\rangle+\eps_r E(\eta).
\end{equation*}
Since $E$ is bounded on the compact set $\Pi(a,b)$, passing to the limit gives
\begin{equation*}
  \langle C,\overline\omega\rangle
  \leq
  \langle C,\eta\rangle
\end{equation*}
for every $\eta\in\Pi(a,b)$. Therefore, we have $\overline\omega\in\mathcal S$.

Taking $\eta=\omega^*$ in the optimality inequality, we obtain
\begin{equation*}
  \eps_r\left(E(\omega^{\eps_r})-E(\omega^*)\right)\leq\langle C,\omega^*\rangle-\langle C,\omega^{\eps_r}\rangle.
\end{equation*}
Since $\omega^*\in\mathcal S$,
\begin{equation*}
  \langle C,\omega^*\rangle \leq \langle C,\omega^{\eps_r}\rangle.
\end{equation*}
Hence,
\begin{equation*}
  E(\omega^{\eps_r})\leq E(\omega^*).
\end{equation*}
Passing to the limit and using the continuity of $E$, we obtain
\begin{equation}\label{eq:geqE}
  E(\overline\omega)\leq E(\omega^*).
\end{equation}
On the other hand, since $\overline\omega\in\mathcal S$ and $\omega^*$ minimizes $E$ on $\mathcal S$,
\begin{equation}\label{eq:leqE}
  E(\omega^*)\leq E(\overline\omega).
\end{equation}
Therefore, combining inequalities \eqref{eq:geqE} and \eqref{eq:leqE}, we obtain
\begin{equation*}
  E(\overline\omega)=E(\omega^*).
\end{equation*}
Since $\omega^*$ is the unique minimizer of $E$ on $\mathcal S$, we conclude that
\begin{equation*}
  \overline\omega=\omega^*.
\end{equation*}
Thus, $\omega^*$ is the unique cluster point of $\omega^\eps$ as $\eps\to0$. Together with the compactness of $\Pi(a,b)$, this proves \eqref{eq:entropy-selection-limit}.
\end{proof}

\begin{thm}\label{thm:gmm-double-limit}
Let $\gamma^{\eps,\lambda}$ and $\gamma^\eps$ be defined by \eqref{eq:gamma-eps-lambda}. Let $\omega^*$ be the unique maximum-entropy minimizer in the mixture Wasserstein problem \eqref{eq:mw-discrete}, and define
\begin{equation*}
  \gamma^*=\gamma_{\omega^*}.
\end{equation*}
Then for every $\varphi\in C_b(\RR^d\times\RR^d)$,
\begin{equation}\label{eq:gmm-double-limit}
\lim_{\eps\to0}\lim_{\lambda\to\infty}\int_{\RR^d\times\RR^d}\varphi(x,y)\dd\gamma^{\eps,\lambda}(x,y)=\int_{\RR^d\times\RR^d}\varphi(x,y)\dd\gamma^*(x,y).
\end{equation}
Thus,
\begin{equation*}
\lim_{\eps\to0}\lim_{\lambda\to\infty}\gamma^{\eps,\lambda}=\gamma^*
\end{equation*}
in the sense of narrow convergence.
\end{thm}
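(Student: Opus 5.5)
The argument splits into the inner limit in $\lambda$ at fixed $\eps$ and the outer limit in $\eps$, each of which reduces to an earlier result.

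Fix $\varphi\in C_b(\RR^d\times\RR^d)$ and $\eps>0$. By Theorem~\ref{thm:gmm-narrow-lambda}, the inner limit exists and equals
\begin{equation*}
  \lim_{\lambda\to\infty}\int_{\RR^d\times\RR^d}\varphi\dd\gamma^{\eps,\lambda}=\int_{\RR^d\times\RR^d}\varphi\dd\gamma^\eps,
\end{equation*}
where $\gamma^\eps=\gamma_{\omega^\eps}$ and $\omega^\eps$ is the balanced entropic minimizer with cost matrix $C_G$. Consequently, \eqref{eq:gmm-double-limit} reduces to the single limit $\int\varphi\dd\gamma^\eps\to\int\varphi\dd\gamma^*$ as $\eps\to0$.

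For the outer limit, apply Proposition~\ref{prop:entropy-selection} with $C=C_G$. Since $\langle C_G,\omega\rangle$ is exactly the objective in \eqref{eq:mw-discrete}, the set $\mathcal S$ of that proposition is the set of minimizers of the mixture Wasserstein problem. Hence the $\omega^*$ in the statement is the maximum-entropy optimizer of Proposition~\ref{prop:entropy-selection}, and $\omega^\eps\to\omega^*$ entrywise. In finite dimensions this is the same as convergence in $\ell^1$.

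To transfer this to the continuous plans, use the representation \eqref{eq:gmm-induced-plan-test} exactly as in the proof of Theorem~\ref{thm:gmm-narrow-lambda}. It gives
\begin{equation*}
  \left|\int_{\RR^d\times\RR^d}\varphi\dd\gamma^\eps-\int_{\RR^d\times\RR^d}\varphi\dd\gamma^*\right|\le\|\varphi\|_\infty\,\|\omega^\eps-\omega^*\|_{\ell^1},
\end{equation*}
because each integral $\int_{\RR^d}\varphi(x,T_{kl}(x))\dd\mu_k(x)$ is bounded by $\|\varphi\|_\infty$, the $\mu_k$ being probability measures. The right-hand side tends to zero, which proves \eqref{eq:gmm-double-limit}. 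Narrow convergence of $\gamma^{\eps,\lambda}$ to $\gamma^*$ in the iterated sense follows because $\varphi$ was arbitrary.

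No step here is a real obstacle. The only points to check are, first, that the limits are iterated, so no uniformity in $\eps$ of the $\lambda$-convergence is needed, and second, that $\omega^*$ in the statement is the same object as in Proposition~\ref{prop:entropy-selection}. The second point holds because the cost $C_G$ and the polytope $\Pi(a,b)$ coincide in the two problems.
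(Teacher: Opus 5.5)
Your proposal is correct and follows essentially the same route as the paper. Both use Theorem~\ref{thm:gmm-narrow-lambda} for the inner limit, then the bound $\|\varphi\|_\infty\|\omega^\eps-\omega^*\|_{\ell^1}$ from \eqref{eq:gmm-induced-plan-test}, combined with Proposition~\ref{prop:entropy-selection} applied to $C_G$, for the outer limit.
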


\begin{proof}
Let $\varphi\in C_b(\RR^d\times\RR^d)$. By Theorem~\ref{thm:gmm-narrow-lambda},
\begin{equation*}
\lim_{\lambda\to\infty}\int_{\RR^d\times\RR^d}\varphi\dd\gamma^{\eps,\lambda}=\int_{\RR^d\times\RR^d}\varphi\dd\gamma^\eps.
\end{equation*}
By \eqref{eq:gmm-induced-plan-test},
\begin{equation*}
  \int_{\RR^d\times\RR^d}\varphi\dd\gamma^\eps-\int_{\RR^d\times\RR^d}\varphi\dd\gamma^*=\sum_{k=1}^{K}\sum_{l=1}^{L}\left(\omega^\eps_{kl}-\omega^*_{kl}\right)\int_{\RR^d}\varphi(x,T_{kl}(x))\dd\mu_k(x).
\end{equation*}
Since $\varphi$ is bounded,
\begin{equation*}
  \left| \int_{\RR^d\times\RR^d}\varphi\dd\gamma^\eps - \int_{\RR^d\times\RR^d}\varphi\dd\gamma^*
  \right| \leq \|\varphi\|_{\infty} \|\omega^\eps-\omega^*\|_{\ell^1}.
\end{equation*}
By Proposition~\ref{prop:entropy-selection} and the equivalence of norms in finite dimensions,
\begin{equation*}
  \|\omega^\eps-\omega^*\|_{\ell^1}\to0
  \qquad\text{as }\eps\to0.
\end{equation*}
Therefore, \eqref{eq:gmm-double-limit} follows.
\end{proof}

\subsection{Entropic partial displacement interpolation}\label{subsec:epot-interpolation}
Let
\begin{equation*}
  \mu^0=\sum_{k=1}^{K}a_k\mu^0_k,\qquad \mu^1=\sum_{l=1}^{L}b_l\mu^1_l
\end{equation*}
be GMMs on the same Euclidean space. Assume that all covariance matrices are positive definite. Let $\omega^{\eps,\lambda}$ be the real--real block of the EPOT minimizer between their component weights. For each $t\in[0,1]$ and each pair $(k,l)$, we define the componentwise McCann displacement interpolation by
\begin{equation}\label{eq:component-interpolation}
  \mu^t_{kl}=\bigl((1-t)\Id+tT_{kl}\bigr)_\#\mu^0_k,
\end{equation}
where $T_{kl}$ is the optimal transport map from $\mu^0_k$ to $\mu^1_l$ defined by \eqref{eq:gaussian-optimal-map}. We define the entropic partial displacement interpolation by
\begin{equation}\label{eq:partial-displacement-interpolation}
  \mu^t_{\eps,\lambda}=\sum_{k=1}^{K}\sum_{l=1}^{L}\omega^{\eps,\lambda}_{kl}\mu^t_{kl}.
\end{equation}
Since each $\mu^t_{kl}$ is a probability measure, the total mass of \eqref{eq:partial-displacement-interpolation} is
\begin{equation*}
|\mu^t_{\eps,\lambda}|=\sum_{k=1}^{K}\sum_{l=1}^{L}\omega^{\eps,\lambda}_{kl}=Z_{\lambda}.
\end{equation*}
Thus, $\mu^t_{\eps,\lambda}$ describes the displacement interpolation of the mass matched by the entropic partial coupling.

\begin{prop}\label{prop:displacement-interpolation-convergence}
For every fixed $\eps>0$ and $t\in[0,1]$, the measures $\mu^t_{\eps,\lambda}$ converge narrowly as $\lambda\to\infty$ to
\begin{equation}\label{eq:balanced-displacement-interpolation}
\mu^t_{\eps}=\sum_{k=1}^{K}\sum_{l=1}^{L}\omega^\eps_{kl}\mu^t_{kl}.
\end{equation}
Moreover, for every $t\in[0,1]$, the measures $\mu^t_{\eps}$ converge narrowly as $\eps\to0$ to
\begin{equation}\label{eq:unregularized-displacement-interpolation}
  \mu^t_*=\sum_{k=1}^{K}\sum_{l=1}^{L}\omega^*_{kl}\mu^t_{kl},
\end{equation}
where $\omega^*$ is the unique maximum-entropy minimizer in the mixture Wasserstein problem \eqref{eq:mw-discrete}.
\end{prop}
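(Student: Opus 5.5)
The plan is to reduce both narrow convergence statements to $\ell^1$ convergence of the finite coupling matrices, exactly as in the proofs of Theorem~\ref{thm:gmm-narrow-lambda} and Theorem~\ref{thm:gmm-double-limit}. Fix $t\in[0,1]$ and $\varphi\in C_b(\RR^d)$. By the definition \eqref{eq:component-interpolation}, each $\mu^t_{kl}$ is a probability measure with
\begin{equation*}
  \int_{\RR^d}\varphi\dd\mu^t_{kl}=\int_{\RR^d}\varphi\bigl((1-t)x+tT_{kl}(x)\bigr)\dd\mu^0_k(x),
\end{equation*}
so that
\begin{equation*}
  \left|\int_{\RR^d}\varphi\dd\mu^t_{kl}\right|\leq\|\varphi\|_\infty .
\end{equation*}
The map $x\mapsto(1-t)x+tT_{kl}(x)$ is affine and hence continuous, so the composition is bounded and continuous, and the integral is well defined.

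For the large-penalty limit, linearity of \eqref{eq:partial-displacement-interpolation} and \eqref{eq:balanced-displacement-interpolation} in the weights gives
\begin{equation*}
  \left|\int\varphi\dd\mu^t_{\eps,\lambda}-\int\varphi\dd\mu^t_{\eps}\right|
  =\left|\sum_{k,l}\bigl(\omega^{\eps,\lambda}_{kl}-\omega^\eps_{kl}\bigr)\int\varphi\dd\mu^t_{kl}\right|
  \leq\|\varphi\|_\infty\,\|\omega^{\eps,\lambda}-\omega^\eps\|_{\ell^1}.
\end{equation*}
Corollary~\ref{cor:plan-convergence}, applied with cost $C_G$, shows that the right-hand side tends to zero as $\lambda\to\infty$. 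Because the total masses $Z_\lambda$ also converge to $1=|\mu^t_\eps|$, this is genuine narrow convergence of finite measures, even though $\mu^t_{\eps,\lambda}$ is not a probability measure.

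For the zero-entropy limit, the same estimate yields
\begin{equation*}
  \left|\int\varphi\dd\mu^t_{\eps}-\int\varphi\dd\mu^t_{*}\right|\leq\|\varphi\|_\infty\|\omega^\eps-\omega^*\|_{\ell^1}.
\end{equation*}
Proposition~\ref{prop:entropy-selection}, applied with $C=C_G$, gives $\omega^\eps\to\omega^*$. Equivalence of norms on $\RR^{K\times L}$ then shows that the bound tends to zero as $\eps\to0$.

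There is no serious obstacle. The only points to check are that the component interpolants $\mu^t_{kl}$ do not depend on $\eps$ or $\lambda$ (they depend only on the Gaussian components and $t$), and that the uniform bound $\|\varphi\|_\infty$ holds for every pair $(k,l)$. Both follow directly from the definitions, so all the dependence on $\eps$ and $\lambda$ sits in the finite weight matrices.
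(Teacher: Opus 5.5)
Your proposal is correct and follows essentially the same route as the paper. Like the paper, you bound each $\int\varphi\dd\mu^t_{kl}$ by $\|\varphi\|_\infty$, reduce both limits to $\ell^1$ convergence of the weight matrices, and then invoke Corollary~\ref{cor:plan-convergence} for $\lambda\to\infty$ and Proposition~\ref{prop:entropy-selection} for $\eps\to0$. Your separate remark about the total masses $Z_\lambda$ is redundant, since testing against $\varphi\equiv1$ already gives it, but it is harmless.
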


\begin{proof}
Let $\varphi\in C_b(\RR^d)$. Since each $\mu^t_{kl}$ is a probability measure,
\begin{equation*}
\left|\int_{\RR^d}\varphi\,\dd\mu^t_{kl}\right|\leq\|\varphi\|_{\infty}.
\end{equation*}
Using \eqref{eq:partial-displacement-interpolation} and \eqref{eq:balanced-displacement-interpolation}, we obtain
\begin{equation*}
  \left|\int_{\RR^d}\varphi\,\dd\mu^t_{\eps,\lambda}-\int_{\RR^d}\varphi\,\dd\mu^t_{\eps}\right| \leq \|\varphi\|_{\infty}\|\omega^{\eps,\lambda}-\omega^\eps\|_{\ell^1}.
\end{equation*}
By Corollary~\ref{cor:plan-convergence}, the right-hand side converges to zero as $\lambda \to \infty$. Therefore, $\mu^t_{\eps,\lambda}$ converges narrowly to $\mu^t_{\eps}$.

Similarly, using \eqref{eq:balanced-displacement-interpolation} and \eqref{eq:unregularized-displacement-interpolation}, we obtain
\begin{equation*}
  \left|\int_{\RR^d}\varphi\,\dd\mu^t_{\eps}-\int_{\RR^d}\varphi\,\dd\mu^t_*\right| \leq \|\varphi\|_{\infty}\|\omega^\eps-\omega^*\|_{\ell^1}.
\end{equation*}
By Proposition~\ref{prop:entropy-selection} and the equivalence of norms in finite dimensions, the right-hand side converges to zero as $\eps \to 0$. Therefore, $\mu^t_{\eps}$ converges narrowly to $\mu^t_*$.
\end{proof}

\section{Partial Gromov--Wasserstein Distance between Gaussian Mixture Models}\label{sec:pmgw}
This section defines the partial mixture Gromov--Wasserstein distance. The construction follows the idea that a GMM can be identified with a discrete probability measure on a finite space of Gaussian components.

\subsection{Partial mixture Gromov--Wasserstein distance}\label{subsec:pmgw-definition}
Let
\begin{equation*}
  \mu=\sum_{i=1}^{K}a_i\mu_i,\qquad \nu=\sum_{j=1}^{L}b_j\nu_j
\end{equation*}
be Gaussian mixtures on $\RR^d$ and $\RR^{d'}$, respectively, where $a_i>0$, $b_j>0$, and
\begin{equation*}
  \sum_{i=1}^{K}a_i=\sum_{j=1}^{L}b_j=1.
\end{equation*}
After merging repeated components, we assume that the components within each mixture are pairwise distinct. We define the finite component spaces
\begin{equation*}
  \mathcal G_\mu=\{\mu_1,\ldots,\mu_K\},\qquad
  \mathcal G_\nu=\{\nu_1,\ldots,\nu_L\},
\end{equation*}
equipped with the restrictions of the $W_2$ distance, and the discrete probability measures
\begin{equation}\label{eq:pmgw-component-measures}
  \alpha_\mu=\sum_{i=1}^{K}a_i\delta_{\mu_i},\qquad
  \alpha_\nu=\sum_{j=1}^{L}b_j\delta_{\nu_j}.
\end{equation}
For $p,q\geq1$ and $\lambda>0$, we define the partial mixture Gromov--Wasserstein distance by
\begin{equation}\label{eq:pmgw-definition}
  \MGW^{\lambda}_{p,q}(\mu,\nu)=\GW^{\lambda}_{p,q}\bigl((\mathcal G_\mu,W_2,\alpha_\mu),(\mathcal G_\nu,W_2,\alpha_\nu)\bigr).
\end{equation}
Similarly, the balanced mixture Gromov--Wasserstein distance is 
\begin{equation}\label{eq:mgw-definition}
  \MGW_{p,q}(\mu,\nu)=\GW_{p,q}\bigl((\mathcal G_\mu,W_2,\alpha_\mu),(\mathcal G_\nu,W_2,\alpha_\nu)\bigr).
\end{equation}

For the weight vectors
\begin{equation*}
  a=(a_1,\ldots,a_K),\qquad
  b=(b_1,\ldots,b_L),
\end{equation*}
we define
\begin{equation*}
  \Pi_{\leq}(a,b)=\left\{\omega\in\RR_{\geq0}^{K\times L}: \omega\ones_L\leq a,\; \omega^\top\ones_K\leq b\right\},
\end{equation*}
where the inequalities are understood componentwise. Equivalently, \eqref{eq:pmgw-definition} can be written as
\begin{equation}\label{eq:pmgw-discrete}
  \left(\MGW^{\lambda}_{p,q}(\mu,\nu)\right)^p=\min_{\omega\in\Pi_{\leq}(a,b)}\left\{\sum_{i,k=1}^{K}\sum_{j,l=1}^{L}\left(\left|W_2(\mu_i,\mu_k)^q-W_2(\nu_j,\nu_l)^q\right|^p-2\lambda\right)\omega_{ij}\omega_{kl}+2\lambda\right\}.
\end{equation}
In the numerical experiments, we use $p=q=2$. In this case, the distortion term in \eqref{eq:pmgw-discrete} is
\begin{equation*}
\left| W_2^2(\mu_i,\mu_k) - W_2^2(\nu_j,\nu_l) \right|^2.
\end{equation*}

\begin{thm}\label{thm:pmgw-properties}
The partial mixture Gromov--Wasserstein distance defined by \eqref{eq:pmgw-definition} has the following properties:
\begin{enumerate}[label=\textup{(\roman*)}]
  \item $\MGW^{\lambda}_{p,q}$ admits a minimizer.
  \item $\MGW^{\lambda}_{p,q}$ defines a metric on the associated component metric measure spaces modulo strong isomorphism. 
  \item If
  \begin{equation*}
    \lambda\geq \max_{\substack{1\leq i,k\leq K\\1\leq j,l\leq L}}\left|W_2(\mu_i,\mu_k)^q-W_2(\nu_j,\nu_l)^q\right|^p,
  \end{equation*}
  then the partial mixture Gromov--Wasserstein distance \eqref{eq:pmgw-definition} coincides with the balanced mixture Gromov--Wasserstein distance \eqref{eq:mgw-definition}. In particular,
  \begin{equation*}
    \MGW^{\lambda}_{p,q}(\mu,\nu) \to \MGW_{p,q}(\mu,\nu) \qquad\text{as }\lambda \to \infty.
  \end{equation*}
\end{enumerate}
\end{thm}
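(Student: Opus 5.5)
The plan is to reduce every assertion to Theorem~\ref{thm:gw-lambda}. We apply it to the finite metric measure spaces $(\mathcal G_\mu,W_2,\alpha_\mu)$ and $(\mathcal G_\nu,W_2,\alpha_\nu)$. The first step is to check that these are genuine metric measure spaces. $W_2$ is a metric on Gaussian probability measures, since all have finite second moments. The components within each mixture are pairwise distinct after merging, so the restriction of $W_2$ to $\mathcal G_\mu$ and $\mathcal G_\nu$ is a true metric rather than a pseudometric. A finite metric space is compact and Polish. Finally, $\alpha_\mu$ and $\alpha_\nu$ are Borel probability measures with full support, because $a_i,b_j>0$.

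Next we verify that \eqref{eq:pmgw-discrete} is exactly \eqref{eq:pgw-renamed} specialized to these spaces. A measure $\gamma\in\Pi_{\leq}(\alpha_\mu,\alpha_\nu)$ on the finite product $\mathcal G_\mu\times\mathcal G_\nu$ is a matrix $\omega=(\omega_{ij})$ with $\omega\ones_L\leq a$ and $\omega^\top\ones_K\leq b$, i.e.\ $\omega\in\Pi_{\leq}(a,b)$. The double integral becomes the quadruple sum. The constant is $\lambda(|\alpha_\mu|^2+|\alpha_\nu|^2)=2\lambda$.

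With this identification, (i) follows from Theorem~\ref{thm:gw-lambda}(i). One can also see it directly: the objective is a continuous (quadratic) function on the compact polytope $\Pi_{\leq}(a,b)$, so a minimum exists. Likewise, (ii) is Theorem~\ref{thm:gw-lambda}(ii), read on the class of component metric measure spaces. The only care needed is to say precisely that the metric property is modulo strong isomorphism of $(\mathcal G_\mu,W_2,\alpha_\mu)$, not of the mixtures $\mu$ themselves. Symmetry, nonnegativity and the triangle inequality are thus inherited, and vanishing holds exactly when there is a measure-preserving isometry between the component spaces.

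For (iii), the supports $\supp(\alpha_\mu)=\mathcal G_\mu$ and $\supp(\alpha_\nu)=\mathcal G_\nu$ are finite and hence compact. The maximum in Theorem~\ref{thm:gw-lambda}(iii) is then exactly the maximum over $1\le i,k\le K$ and $1\le j,l\le L$ in the statement, so the hypothesis gives $\MGW^\lambda_{p,q}=\MGW_{p,q}$. Since this threshold is a finite number independent of $\lambda$, equality holds for all sufficiently large $\lambda$, and the limit statement follows trivially.

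The main obstacle is not analytic. It is the bookkeeping at the first step: the component spaces must satisfy the hypotheses of Bai et al. This requires the merging of repeated components, which makes $W_2$ a metric, and the positivity of the weights, which identifies supports with the whole component sets. It also requires matching the normalizations of $\Pi_{\leq}$ and the constant $2\lambda$ in \eqref{eq:pmgw-discrete} with \eqref{eq:pgw-renamed}.
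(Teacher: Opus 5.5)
Your proposal is correct and follows essentially the same route as the paper: both identify $\MGW^{\lambda}_{p,q}$ with the partial GW distance between the finite, hence compact, component spaces $(\mathcal G_\mu,W_2,\alpha_\mu)$ and $(\mathcal G_\nu,W_2,\alpha_\nu)$, and then read off (i)--(iii) directly from Theorem~\ref{thm:gw-lambda}. Your additional checks (full supports from $a_i,b_j>0$, the constant $\lambda(|\alpha_\mu|^2+|\alpha_\nu|^2)=2\lambda$ matching \eqref{eq:pmgw-discrete}, and the direct compactness argument for (i)) only make explicit what the paper's short proof leaves implicit.
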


\begin{proof}
The component spaces $\mathcal G_\mu$ and $\mathcal G_\nu$ are finite and hence compact. By \eqref{eq:pmgw-definition}, the partial mixture Gromov--Wasserstein distance is the general partial Gromov--Wasserstein distance applied to the component metric measure spaces
\begin{equation*}
  (\mathcal G_\mu,W_2,\alpha_\mu), \qquad (\mathcal G_\nu,W_2,\alpha_\nu).
\end{equation*}
Therefore, the existence of a minimizer, the metric property modulo strong isomorphism, and the large-penalty statement follow directly from Theorem~\ref{thm:gw-lambda}. 
\end{proof}

\section{Barycentric Projection Maps for Gaussian Mixture Models}\label{sec:barycentric}
Barycentric projection maps convert component couplings into pointwise maps. This section defines the barycentric projection maps associated with the entropic partial Gaussian mixture optimal transport problem \eqref{eq:gmm-epot-value} and the partial mixture Gromov--Wasserstein problem \eqref{eq:pmgw-discrete}. These maps are used in the numerical experiments.

\subsection{Barycentric projection map for the entropic partial Gaussian mixture optimal transport problem}\label{subsec:epot-barycentric}
Let
\begin{equation*}
  \mu=\sum_{k=1}^{K}a_k\mu_k,\qquad \nu=\sum_{l=1}^{L}b_l\nu_l
\end{equation*}
be GMMs on the same Euclidean space. We first recall the construction of a pointwise assignment from a Gaussian mixture transport plan. Let $\omega\in\Pi(a,b)$, and let $\gamma_\omega$ be the induced transport plan defined by \eqref{eq:gmm-induced-plan}. 
Following Delon and Desolneux \cite{delon-2020}, we define the barycentric projection map associated with $\gamma_\omega$ by
\begin{equation*}
  T_b^\omega(x)=\mathbb E_{\gamma_\omega}[Y\mid X=x].
\end{equation*}
Using the formal representation \eqref{eq:gmm-induced-plan-formal} and the identity
\begin{equation*}
  \sum_{l=1}^{L}\omega_{kl}=a_k,
\end{equation*}
we obtain
\begin{equation*}
T_b^\omega(x)=\frac{\displaystyle\sum_{k=1}^{K}\sum_{l=1}^{L}\omega_{kl}p_{\mu_k}(x)T_{kl}(x)}{\displaystyle\sum_{k=1}^{K}a_kp_{\mu_k}(x)}.
\end{equation*}
When $\omega$ is a minimizer of the mixture Wasserstein problem \eqref{eq:mw-discrete}, this is the mean assignment introduced by Delon and Desolneux \cite{delon-2020}.

Let $\omega^\eps$ be the unique minimizer of the balanced entropic optimal transport problem with the Gaussian component cost matrix $C_G$. Following the entropic Gaussian mixture optimal transport construction in \cite{PMID:39710672}, we define
\begin{equation}\label{eq:balanced-entropic-barycentric-map}
T_b^\eps(x)=\frac{\displaystyle\sum_{k=1}^{K}\sum_{l=1}^{L}\omega^\eps_{kl}p_{\mu_k}(x)T_{kl}(x)}{\displaystyle\sum_{k=1}^{K}a_kp_{\mu_k}(x)}.
\end{equation}

We now extend this construction to the entropic partial coupling. Let $\omega^{\eps,\lambda}$ be the real--real block of the EPOT minimizer associated with \eqref{eq:gmm-epot-value}, and let
\begin{equation*}
  \gamma^{\eps,\lambda} = \gamma_{\omega^{\eps,\lambda}}
\end{equation*}
be the induced partial transport plan. We define the entropic partial barycentric projection map by
\begin{equation}\label{eq:epot-barycentric-map}
  T_b^{\eps,\lambda}(x)=\frac{\displaystyle\sum_{k=1}^{K}\sum_{l=1}^{L}\omega^{\eps,\lambda}_{kl}p_{\mu_k}(x)T_{kl}(x)}{\displaystyle\sum_{k=1}^{K}\sum_{l=1}^{L}\omega^{\eps,\lambda}_{kl}p_{\mu_k}(x)}.
\end{equation}
Since the EPOT minimizer is strictly positive and the Gaussian densities are positive, the denominator in \eqref{eq:epot-barycentric-map} is positive for every $x\in\RR^d$.

The next result shows that the entropic partial barycentric projection map converges to the balanced entropic barycentric projection map \eqref{eq:balanced-entropic-barycentric-map} in the large-penalty limit.

\begin{thm}\label{thm:epot-barycentric-convergence}
For every fixed $\eps>0$,
\begin{equation}\label{eq:epot-barycentric-L2-convergence}
  \lim_{\lambda\to\infty}\int_{\RR^d}\left|T_b^{\eps,\lambda}(x)-T_b^\eps(x)\right|^2\dd\mu(x)=0.
\end{equation}
\end{thm}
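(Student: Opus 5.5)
The plan is to combine pointwise convergence of the maps with an $L^2(\mu)$-integrable envelope that does not depend on $\lambda$, and then conclude by dominated convergence.

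\emph{Pointwise convergence.} Fix $x\in\RR^d$. By Corollary~\ref{cor:plan-convergence}, $\omega^{\eps,\lambda}\to\omega^\eps$ entrywise as $\lambda\to\infty$. Write $N_\lambda(x)$ and $D_\lambda(x)$ for the numerator and denominator in \eqref{eq:epot-barycentric-map}. Both are finite linear combinations of the fixed quantities $p_{\mu_k}(x)T_{kl}(x)$ and $p_{\mu_k}(x)$, with coefficients $\omega^{\eps,\lambda}_{kl}$. Hence
\begin{equation*}
  N_\lambda(x)\to\sum_{k,l}\omega^\eps_{kl}p_{\mu_k}(x)T_{kl}(x),
  \qquad
  D_\lambda(x)\to\sum_{k,l}\omega^\eps_{kl}p_{\mu_k}(x)=\sum_{k}a_kp_{\mu_k}(x),
\end{equation*}
where the last equality uses $\omega^\eps\in\Pi(a,b)$, so that $\sum_l\omega^\eps_{kl}=a_k$. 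The limit denominator is strictly positive because $a_k>0$ and Gaussian densities with positive definite covariances are positive everywhere. Therefore $T_b^{\eps,\lambda}(x)\to T_b^\eps(x)$ for every $x$.

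\emph{Uniform envelope.} For each $x$, $T_b^{\eps,\lambda}(x)$ is a convex combination of the points $T_{kl}(x)$, with weights $\omega^{\eps,\lambda}_{kl}p_{\mu_k}(x)/D_\lambda(x)\geq0$ summing to one. The same holds for $T_b^\eps(x)$, by the identity $\sum_l\omega^\eps_{kl}=a_k$ used above. Consequently
\begin{equation*}
  |T_b^{\eps,\lambda}(x)-T_b^\eps(x)|^2\leq 4\max_{k,l}|T_{kl}(x)|^2\leq C(1+|x|^2),
\end{equation*}
where $C$ depends only on the means $m_{1,l},m_{0,k}$ and the norms of the matrices $A_{kl}$, since each $T_{kl}$ is affine. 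This bound is uniform in $\lambda$. The function $1+|x|^2$ is $\mu$-integrable because $\mu$ is a finite mixture of Gaussians and so has finite second moment.

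\emph{Conclusion and main point.} Dominated convergence with respect to $\mu$ now gives \eqref{eq:epot-barycentric-L2-convergence}. The one step that needs care is the envelope: the denominator of $T_b^{\eps,\lambda}$ can be small where $p_{\mu_k}(x)$ is tiny, so one should not bound the difference of quotients directly. Instead I would rely on the convex-combination structure, which bounds the maps without any lower bound on $D_\lambda$. Positivity of $D_\lambda(x)$ for each $\lambda$ is ensured by the remark after \eqref{eq:epot-barycentric-map}.
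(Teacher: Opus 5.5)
Your proof is correct but follows a different route from the paper's. You get pointwise convergence of $T_b^{\eps,\lambda}(x)$ from entrywise convergence of the couplings. You then apply dominated convergence with the $\lambda$-independent envelope $4\max_{k,l}|T_{kl}(x)|^2\le C(1+|x|^2)$, which comes from both maps lying in the convex hull of $\{T_{kl}(x)\}$.

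The paper instead bounds the difference of quotients directly, which your closing remark advises against. The small-denominator worry does not arise there, for two reasons.

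\begin{itemize}
  \item Since $\omega^\eps\in\Pi(a,b)$ and $\omega^{\eps,\lambda}\to\omega^\eps$ in $\ell^1$, the row sums of $\omega^{\eps,\lambda}$ are eventually at least $a_k/2$. Hence $D_\lambda(x)\ge\frac12\sum_i a_ip_{\mu_i}(x)\ge\frac12 a_kp_{\mu_k}(x)$ for every $k$.
  \item After subtracting $0=\sum_{k,l}\omega^\eps_{kl}p_{\mu_k}(x)\bigl(T_{kl}(x)-T_b^\eps(x)\bigr)$ from the numerator, each remaining term carries its own factor $p_{\mu_k}(x)$.
\end{itemize}

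This gives, uniformly in $x$,
\[
  |T_b^{\eps,\lambda}(x)-T_b^\eps(x)|\le\frac{2}{\min_i a_i}\,\|\omega^{\eps,\lambda}-\omega^\eps\|_{\ell^1}\max_{k,l}|T_{kl}(x)-T_b^\eps(x)|.
\]
Integrating against $\mu$ yields an explicit stability estimate $\|T_b^{\eps,\lambda}-T_b^\eps\|_{L^2(\mu)}\le C'\|\omega^{\eps,\lambda}-\omega^\eps\|_{\ell^1}$ for all large $\lambda$.

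What each route buys:
\begin{itemize}
  \item The paper's estimate needs no limit theorem, and any convergence rate for the component couplings transfers directly to the maps.
  \item Your route is softer and yields only the qualitative limit. In exchange, it needs only entrywise convergence and the convex-combination structure, with no uniform lower bound on the row sums of $\omega^{\eps,\lambda}$.
\end{itemize}
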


\begin{proof}
By Corollary~\ref{cor:plan-convergence},
\begin{equation}\label{eq:barycentric-coupling-convergence}
  \left\|
  \omega^{\eps,\lambda}-\omega^\eps
  \right\|_{\ell^1}
  \to0
\end{equation}
as $\lambda\to\infty$. Since $\omega^\eps\in\Pi(a,b)$, for every $k$,
\begin{equation*}
  \left| \sum_{l=1}^{L}\omega^{\eps,\lambda}_{kl}-a_k \right| \leq \left\| \omega^{\eps,\lambda}-\omega^\eps \right\|_{\ell^1}.
\end{equation*}
Therefore, for all sufficiently large $\lambda$,
\begin{equation*}
  \sum_{l=1}^{L}\omega^{\eps,\lambda}_{kl}
  \geq
  \frac{a_k}{2}
\end{equation*}
for every $k$. It follows that
\begin{equation}\label{eq:partial-barycentric-denominator-bound}
  \sum_{k=1}^{K}\sum_{l=1}^{L}
  \omega^{\eps,\lambda}_{kl}p_{\mu_k}(x) \geq \frac{1}{2} \sum_{k=1}^{K}a_kp_{\mu_k}(x).
\end{equation}
By the definition of $T_b^\eps$,
\begin{equation*}
  \sum_{k=1}^{K}\sum_{l=1}^{L}\omega^\eps_{kl}p_{\mu_k}(x)\left(T_{kl}(x)-T_b^\eps(x)\right)=0.
\end{equation*}
Hence,
\begin{equation*}
  T_b^{\eps,\lambda}(x)-T_b^\eps(x)=\frac{\displaystyle\sum_{k=1}^{K}\sum_{l=1}^{L}\left(\omega^{\eps,\lambda}_{kl}-\omega^\eps_{kl}\right)p_{\mu_k}(x)\left(T_{kl}(x)-T_b^\eps(x)\right)}{\displaystyle\sum_{k=1}^{K}\sum_{l=1}^{L}\omega^{\eps,\lambda}_{kl}p_{\mu_k}(x)}.
\end{equation*}
Since
\begin{equation*}
  \sum_{i=1}^{K}a_ip_{\mu_i}(x)
  \geq
  a_kp_{\mu_k}(x),
\end{equation*}
the estimate \eqref{eq:partial-barycentric-denominator-bound} gives
\begin{equation*}
  \left|T_b^{\eps,\lambda}(x)-T_b^\eps(x)\right|\leq \frac{2}{\displaystyle\min_{1\leq i\leq K}a_i}\left\|\omega^{\eps,\lambda}-\omega^\eps\right\|_{\ell^1}\max_{\substack{1\leq k\leq K\\1\leq l\leq L}}\left|T_{kl}(x)-T_b^\eps(x)\right|.
\end{equation*}
Therefore,
\begin{align*}
  &\int_{\RR^d}\left|T_b^{\eps,\lambda}(x)-T_b^\eps(x)\right|^2\dd\mu(x) \\
  &\leq \frac{4}{\left(\displaystyle\min_{1\leq i\leq K}a_i\right)^2}\left\|\omega^{\eps,\lambda}-\omega^\eps\right\|_{\ell^1}^2\int_{\RR^d}\max_{\substack{1\leq k\leq K\\1\leq l\leq L}}\left|T_{kl}(x)-T_b^\eps(x)\right|^2\dd\mu(x).
\end{align*}
Each $T_{kl}$ is affine. Hence, there exists a constant $C_0>0$ such that
\begin{equation*}
  \max_{\substack{1\leq k\leq K\\1\leq l\leq L}}
  |T_{kl}(x)| \leq C_0(1+|x|).
\end{equation*}
Moreover, $T_b^\eps(x)$ is a convex combination of the finitely many values $T_{kl}(x)$. Thus, for some constant $C>0$,
\begin{equation*}
  \max_{\substack{1\leq k\leq K\\1\leq l\leq L}} \left|T_{kl}(x)-T_b^\eps(x)\right|^2 \leq C(1+|x|^2).
\end{equation*}
Since $\mu$ has a finite second moment, the integral on the right-hand side is finite. The convergence \eqref{eq:epot-barycentric-L2-convergence} now follows from \eqref{eq:barycentric-coupling-convergence}.
\end{proof}

\subsection{Barycentric projection map for the partial mixture Gromov--Wasserstein problem}\label{subsec:pmgw-barycentric}
Let
\begin{equation*}
  \mu=\sum_{k=1}^{K}a_k\mu_k,\qquad
  \nu=\sum_{l=1}^{L}b_l\nu_l
\end{equation*}
be GMMs on $\RR^d$ and $\RR^{d'}$, respectively, where $d\geq d'$. Assume that all covariance matrices are positive definite. Let $\omega^\lambda$ be a minimizer of the partial mixture Gromov--Wasserstein problem \eqref{eq:pmgw-discrete} with $p=q=2$.

A mixture Gromov--Wasserstein coupling determines correspondences between Gaussian components but does not directly determine a transport plan between the ambient spaces. Following the alignment-based assignment of Salmona, Delon, and Desolneux \cite{delon-2023}, we associate a pointwise map with the partial component coupling $\omega^\lambda$.

Set
\begin{equation*}
  Z_\lambda=\sum_{k=1}^{K}\sum_{l=1}^{L}\omega^\lambda_{kl},
\end{equation*}
and assume that $Z_\lambda>0$. We define the means of the matched source and target masses by
\begin{equation}\label{eq:pmgw-matched-means}
  m_0^\lambda=\frac{1}{Z_\lambda}\sum_{k=1}^{K}\sum_{l=1}^{L}\omega^\lambda_{kl}m_{0,k},\qquad m_1^\lambda=\frac{1}{Z_\lambda}\sum_{k=1}^{K}\sum_{l=1}^{L}\omega^\lambda_{kl}m_{1,l}.
\end{equation}
The corresponding centered components are
\begin{equation*}
  \overline\mu_k^\lambda=(\tau_{-m_0^\lambda})_\#\mu_k,\qquad \overline\nu_l^\lambda=(\tau_{-m_1^\lambda})_\#\nu_l,
\end{equation*}
where $\tau_c(x)=x+c$.

Let
\begin{equation*}
  \St(d,d') = \left\{ P\in\RR^{d\times d'}: P^\top P=I_{d'} \right\}.
\end{equation*}
We choose an alignment matrix
\begin{equation}\label{eq:alignment-problem}
  P^\lambda \in \argmin_{P\in\St(d,d')} \sum_{k=1}^{K}\sum_{l=1}^{L} \omega^\lambda_{kl} W_2^2\left(\overline\mu_k^\lambda, P_\#\overline\nu_l^\lambda\right).
\end{equation}
For each pair $(k,l)$, let $S_{kl}^\lambda$ be the Gaussian optimal transport map from $\overline\mu_k^\lambda$ to $P^\lambda_\#\overline\nu_l^\lambda$. We define
\begin{equation}\label{eq:pmgw-component-map}
  R_{kl}^\lambda(x) = m_1^\lambda + (P^\lambda)^\top S_{kl}^\lambda\left(x-m_0^\lambda\right).
\end{equation}
Since
\begin{equation*}
  (P^\lambda)^\top P^\lambda=I_{d'},
\end{equation*}
we have
\begin{equation*}
  (R_{kl}^\lambda)_\#\mu_k=\nu_l.
\end{equation*}

The coupling $\omega^\lambda$ and the component maps \eqref{eq:pmgw-component-map} induce the partial transport plan
\begin{equation*}
\gamma^{\GW,\lambda}=\sum_{k=1}^{K}\sum_{l=1}^{L}\omega^\lambda_{kl}(\Id,R_{kl}^\lambda)_\#\mu_k.
\end{equation*}
Its marginals satisfy
\begin{equation*}
(\pi_1)_\#\gamma^{\GW,\lambda}=\sum_{k=1}^{K}\left(\sum_{l=1}^{L}\omega^\lambda_{kl}\right)\mu_k\leq\mu,\qquad (\pi_2)_\#\gamma^{\GW,\lambda}=\sum_{l=1}^{L}\left(\sum_{k=1}^{K}\omega^\lambda_{kl}\right)\nu_l\leq\nu.
\end{equation*}

After normalizing $\gamma^{\GW,\lambda}$ by its total mass $Z_\lambda$, we define its barycentric projection map by
\begin{equation}\label{eq:pmgw-barycentric-map}
  T_b^{\GW,\lambda}(x)=\frac{\displaystyle\sum_{k=1}^{K}\sum_{l=1}^{L}\omega^\lambda_{kl}p_{\mu_k}(x)R_{kl}^\lambda(x)}{\displaystyle\sum_{k=1}^{K}\sum_{l=1}^{L}\omega^\lambda_{kl}p_{\mu_k}(x)}.
\end{equation}
Since $Z_\lambda>0$ and the Gaussian densities are positive, the denominator in \eqref{eq:pmgw-barycentric-map} is positive for every $x\in\RR^d$.

If $\omega^\lambda$ is balanced, then $Z_\lambda=1$, and the matched means \eqref{eq:pmgw-matched-means} coincide with the means of the full mixtures. Hence, the construction reduces to the balanced alignment-based assignment of \cite{delon-2023}.

The alignment problem \eqref{eq:alignment-problem} is computed using the numerical procedure of \cite{delon-2023}, with the balanced component coupling and the full mixture means replaced by $\omega^\lambda$ and the matched means \eqref{eq:pmgw-matched-means}, respectively.

\section{Numerical Experiments}\label{sec:numerical}
This section presents numerical experiments for the proposed entropic partial optimal transport and the partial mixture Gromov--Wasserstein distance. 

\subsection{Entropic partial optimal transport for Gaussian mixtures}\label{subsec:numerics-epot}
We first study entropic partial Gaussian mixture optimal transport between two GMMs on $\RR^2$. The source mixture is denoted by $G_A$ and the target mixture by $G_B$. Their weights, means, and covariance matrices are as follows: 
\begin{equation*}
  \begin{aligned}
  G_A:\quad
  &\alpha_A=(0.5,0.5),\\
  &m_A^{(1)}=(-8.0,-5.0),
  \qquad
  m_A^{(2)}=(1.0,-8.0),\\
  &\Sigma_A^{(1)}
  =
  \begin{pmatrix}
    0.2 & 0.04\\
    0.04 & 0.2
  \end{pmatrix},
  \qquad
  \Sigma_A^{(2)}
  =
  \begin{pmatrix}
    0.2 & -0.02\\
    -0.02 & 0.2
  \end{pmatrix},\\[0.4em]
  G_B:\quad
  &\alpha_B=(0.45,0.45,0.1),\\
  &m_B^{(1)}=(-3.5,3.5),
  \qquad
  m_B^{(2)}=(3.5,3.5),
  \qquad
  m_B^{(3)}=(9.0,7.5),\\
  &\Sigma_B^{(1)}
  =
  \begin{pmatrix}
    0.2 & -0.02\\
    -0.02 & 0.2
  \end{pmatrix},
  \qquad
  \Sigma_B^{(2)}
  =
  \begin{pmatrix}
    0.2 & 0.04\\
    0.04 & 0.2
  \end{pmatrix},
  \qquad
  \Sigma_B^{(3)}
  =
  \begin{pmatrix}
    0.2 & 0\\
    0 & 0.2
  \end{pmatrix}.
  \end{aligned}
\end{equation*}
The third component of $G_B$ is spatially separated from the other components and is treated as an outlying target component. For sufficiently small values of the penalty parameter $\lambda>0$, matching this component is more costly than leaving part of the mass unmatched.

The component cost matrix $C_G$ is computed using the Gaussian squared Wasserstein distance \eqref{eq:intro-gaussian-w2}. In the numerical experiments, we use the normalized cost matrix
\begin{equation}\label{eq:numerical-normalized-cost}
  \widehat C_G = \frac{C_G}{\displaystyle \max_{\substack{1\leq k\leq K\\1\leq l\leq L}}(C_G)_{kl}}.
\end{equation}
Thus, all reported values of the penalty parameter $\lambda \in (0, 0.5]$ and the regularization parameter $\eps>0$ are relative to the scale of the normalized component costs \eqref{eq:numerical-normalized-cost}.

\subsubsection{Single value of \texorpdfstring{$\lambda$}{lambda}}
We first set $\lambda=0.3$ and $\eps=0.01$. Figure~\ref{fig:lambda-fixed} shows the real--real block $\omega^{\eps,\lambda}$ of the optimal coupling in \eqref{eq:gmm-epot-value}. The red and blue density plots show the source and target GMMs, respectively. The width and color of each line indicate the corresponding coupling weight $\omega^{\eps,\lambda}_{kl}$.

\begin{figure}[htbp]
  \centering 
  \includegraphics[width=0.5\textwidth]{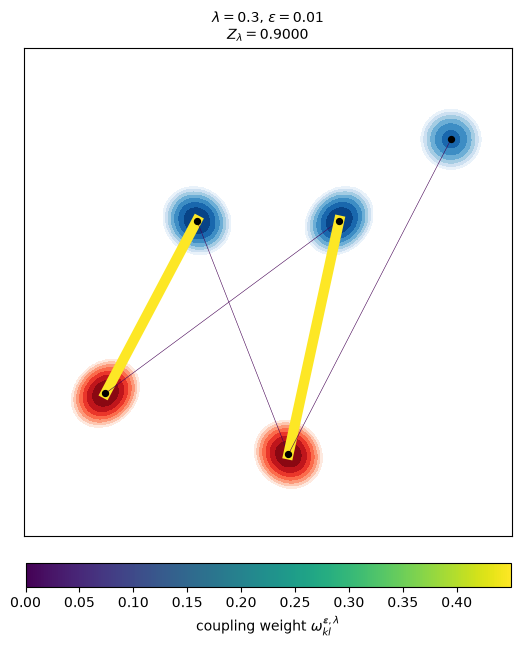}
  \caption{Entropic partial mixture coupling $\omega^{\eps,\lambda}$ for $\lambda=0.3$ and $\eps=0.01$.}
  \label{fig:lambda-fixed}
\end{figure}

The coupling exhibits the expected selective behavior. Most of the transported mass is assigned to the two target components that are geometrically compatible with the source components, while the isolated target component receives only negligible mass.

\subsubsection{Effect of the penalty parameter \texorpdfstring{$\lambda$}{lambda}}
We next fix $\eps=0.01$ and consider
\begin{equation*}
  \lambda\in\{0,0.125,0.25,0.375,0.5\}.
\end{equation*}
We include $\lambda=0$ as a numerical reference, although the theoretical results are stated for $\lambda > 0$, since the finite-dimensional problem \eqref{eq:gmm-epot-value} remains well defined at $\lambda=0$. Figure~\ref{fig:lambda-sweep} shows the resulting real--real coupling blocks.

\begin{figure}[htbp]
  \centering
  \includegraphics[width=0.95\textwidth]{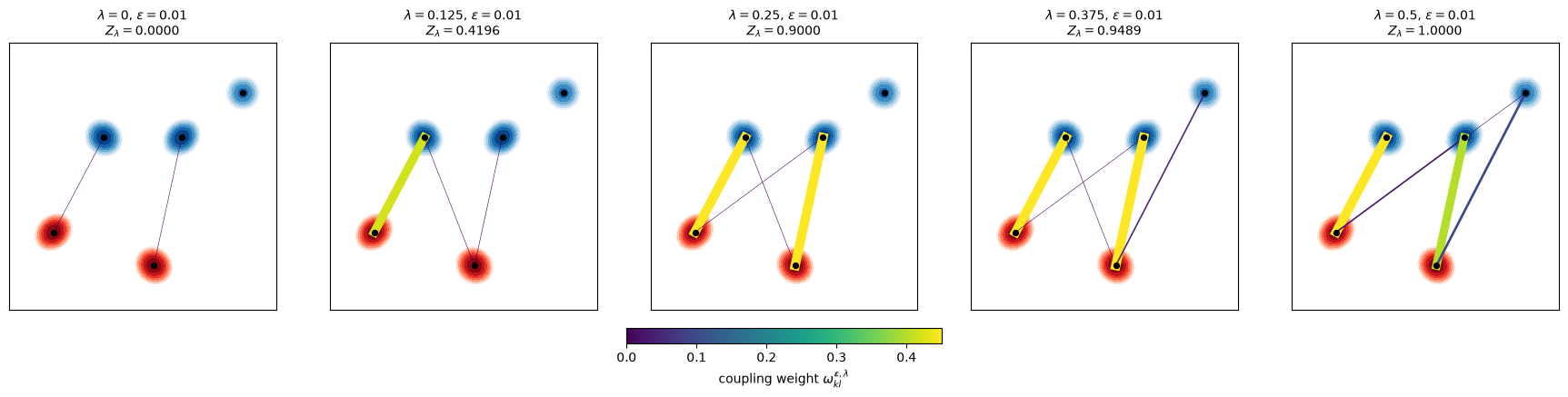}
  \caption{Entropic partial mixture couplings for varying $\lambda$ with fixed $\eps=0.01$. As $\lambda$ increases, a larger amount of mass is matched between the source and target GMMs.}
  \label{fig:lambda-sweep}
\end{figure}

For small values of $\lambda$, only the most compatible component pairs receive substantial transported mass. As $\lambda$ increases, leaving mass unmatched becomes more expensive, and the total matched mass increases in this experiment. This behavior is consistent with Proposition~\ref{prop:mass-convergence} and the convergence of the real--real coupling block established in Corollary~\ref{cor:plan-convergence}.

\subsubsection{Joint effects of the penalty and regularization parameters}
We vary both the penalty parameter $\lambda$ and the entropic regularization parameter $\eps$ in \eqref{eq:gmm-epot-value}. We consider
\begin{equation*}
  \lambda\in\{0.125,0.1875,0.25,0.3125,0.375,0.4375,0.5\},
  \qquad
  \eps\in\{0.01,0.11,0.21\}.
\end{equation*}
Figure~\ref{fig:lambda-epsilon-sweep} shows the resulting real--real coupling blocks.

\begin{figure}[htbp]
  \centering
  \includegraphics[width=\textwidth]{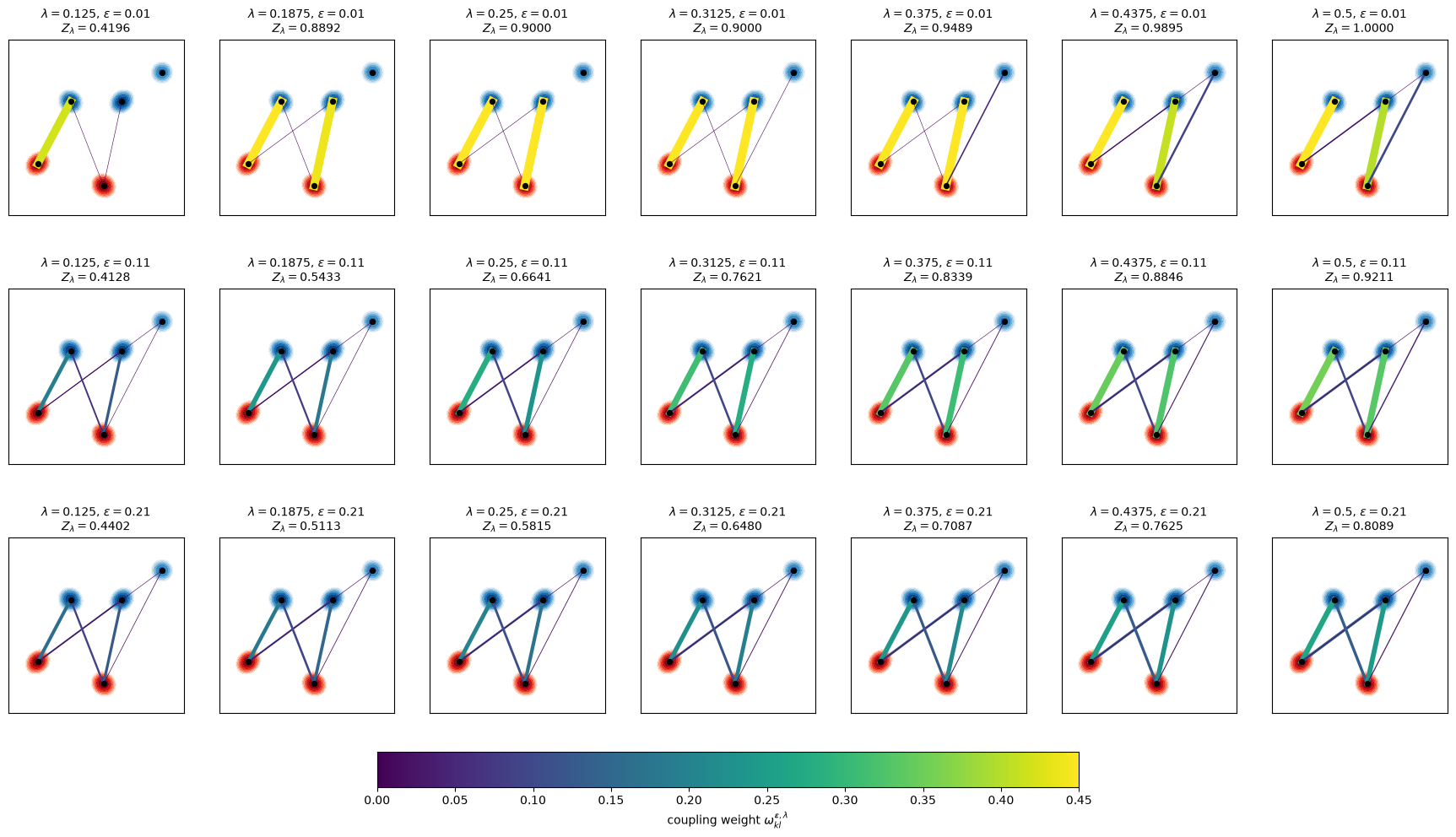}
  \caption{Joint effects of $\lambda$ and $\eps$ on the real--real coupling block.}
  \label{fig:lambda-epsilon-sweep}
\end{figure}

At fixed $\eps$, increasing $\lambda$ increases the amount of matched mass, while at fixed $\lambda$, increasing $\eps$ makes the coupling more diffuse across component pairs.

\subsubsection{Entropic partial displacement interpolation}
We next visualize the entropic partial displacement interpolation $\mu_{\eps,\lambda}^t$ defined by \eqref{eq:partial-displacement-interpolation}. For each parameter pair, the Gaussian interpolation for each component pair is computed according to \eqref{eq:component-interpolation} and weighted by the corresponding coupling weight $\omega^{\eps,\lambda}_{kl}$. Figure~\ref{fig:gmm-flow-grid} shows three representative parameter pairs.

\begin{figure}[htbp]
  \centering
  \includegraphics[width=\textwidth]{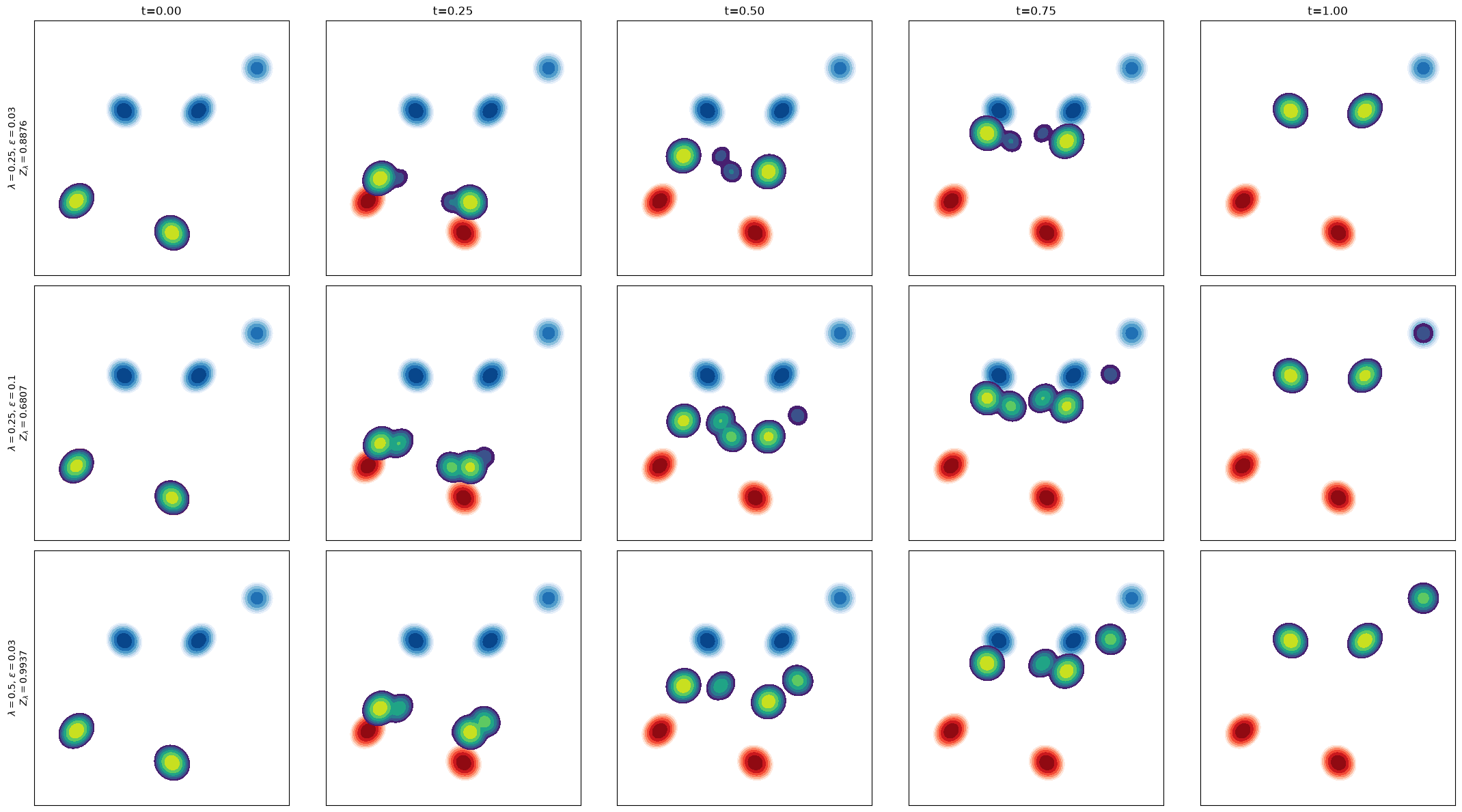}
  \caption{Entropic partial displacement interpolations for $(\eps,\lambda)=(0.03,0.25)$, $(0.10,0.25)$, and $(0.03,0.50)$, shown in the first, second, and third rows, respectively.}
  \label{fig:gmm-flow-grid}
\end{figure}

\subsubsection{Interpolation induced by the entropic partial barycentric projection map}
We visualize the entropic partial barycentric projection map $T_b^{\eps,\lambda}$ defined by \eqref{eq:epot-barycentric-map}. We draw $2000$ samples from each of $G_A$ and $G_B$. For each source sample $x$, we consider the pointwise interpolation
\begin{equation}\label{eq:epot-barycentric-interpolation}
  x_t^{\eps,\lambda}
  =
  (1-t)x+tT_b^{\eps,\lambda}(x),
  \qquad
  t\in[0,1].
\end{equation}
We compare $\lambda=0.25$ and $\lambda=0.5$ with fixed $\eps=0.03$.

\begin{figure}[htbp]
  \centering

  \includegraphics[width=\textwidth]{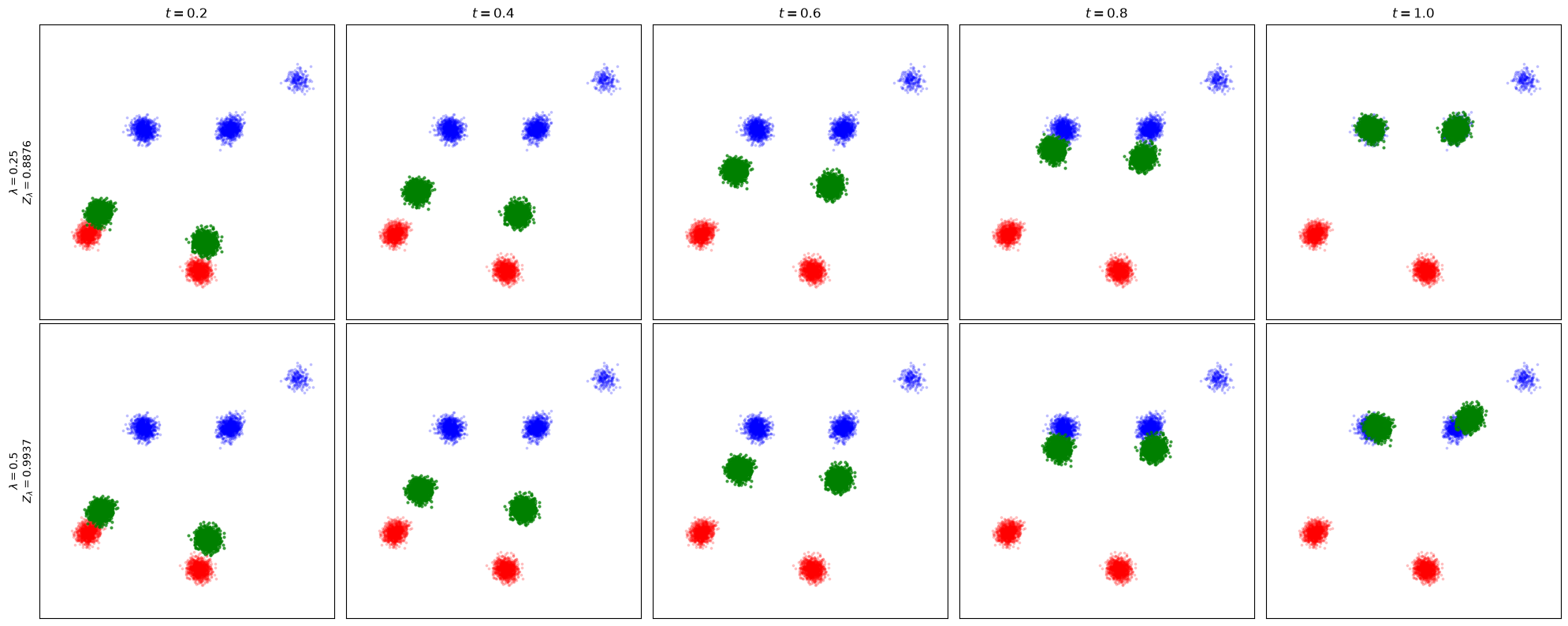}

  \caption{Pointwise interpolations induced by the entropic partial barycentric projection map with fixed $\eps=0.03$. The red and blue points are samples from the source and target GMMs, respectively, and the green points are the interpolated source samples defined by \eqref{eq:epot-barycentric-interpolation}.}
  \label{fig:entropic-partial-barycentric-interpolation}
\end{figure}

For $\lambda=0.25$, the interpolation is primarily governed by the two geometrically compatible target components, while the isolated target component has little influence. For $\lambda=0.5$, the pointwise interpolation is closer to that induced by the balanced entropic barycentric projection map $T_b^\eps$, in qualitative agreement with Theorem~\ref{thm:epot-barycentric-convergence}.

\subsection{Point-cloud matching using partial mixture Gromov--Wasserstein distance}\label{subsec:numerics-pmgw}
We finally compare balanced and partial GW-type methods on two synthetic
point clouds in $\RR^3$. For the partial methods, we set $\lambda=0.01$. The source point cloud $P_1 \subset \RR^3$ consists of $300$ points sampled from a GMM with six equally weighted components whose means are arranged on a circle of radius $4$ in the plane $z=0$. The target point cloud $P_2\subset\RR^3$ consists of $300$ points sampled from a corresponding six-component GMM placed at a different height, together with $50$ additional noise points. Thus, $P_2$ contains a structured part corresponding to $P_1$ and an outlying part that need not be matched.

We compare four methods: balanced GW \eqref{eq:gw-renamed} and partial GW \eqref{eq:pgw-renamed}, applied directly to the empirical point-cloud measures, and balanced mixture GW \eqref{eq:mgw-definition} and partial mixture GW \eqref{eq:pmgw-discrete}, applied to fitted GMMs. All computations use $p=q=2$. For a fair comparison, all cost matrices are normalized before solving the corresponding optimal transport problems. The point-level methods compute couplings directly between individual points. For the mixture methods, the fitted GMMs are identified with the component metric measure spaces defined by \eqref{eq:pmgw-component-measures}, and the component couplings are computed from the corresponding pairwise Gaussian $W_2$ distance matrices. Following \cite{delon-2023}, the alignment matrix is computed by projected gradient descent for \eqref{eq:alignment-problem}. In the partial case, the matched means \eqref{eq:pmgw-matched-means} are used in the alignment construction. 

For a point-level coupling $\gamma=(\gamma_{ij})$, we use the discrete barycentric projection map
\begin{equation*}
  T_b(x_i) = \frac{\displaystyle\sum_j\gamma_{ij}y_j}{\displaystyle\sum_j\gamma_{ij}},
\end{equation*}
whenever $\sum_j\gamma_{ij}>0$. Source points with zero matched
mass are left unassigned. For all four methods, each mapped source
point is then assigned to its nearest point in $P_2$.

Figure~\ref{fig:gw-comparison} compares the nearest-point correspondences induced by the barycentric projection maps. Since balanced GW and balanced mixture GW must match all mass, the outliers in $P_2$ influence the resulting transport plans and produce many inappropriate correspondences. In contrast, the partial methods can leave the outlying part unmatched. 

To further compare the induced pointwise maps, Figure~\ref{fig:gw-bary-comparison} shows the barycentric projection maps before the nearest-point assignment. Although barycentric projection maps generally do not push the source measure forward exactly to the target measure, they are deterministic pointwise maps obtained by taking the barycenters of the conditional target distributions encoded by the underlying transport plans. Even before the nearest-point post-processing, the partial mixture GW barycentric projection map \eqref{eq:pmgw-barycentric-map} shown in Figure~\ref{fig:gw-bary-comparison}(b) preserves the common geometric structure more faithfully than the balanced mixture GW barycentric projection map shown in Figure~\ref{fig:gw-bary-comparison}(a). This indicates that the robustness to outliers is already reflected in the partial component coupling and the induced barycentric projection map.

\begin{figure}[htbp]
  \centering

  \includegraphics[width=\textwidth]{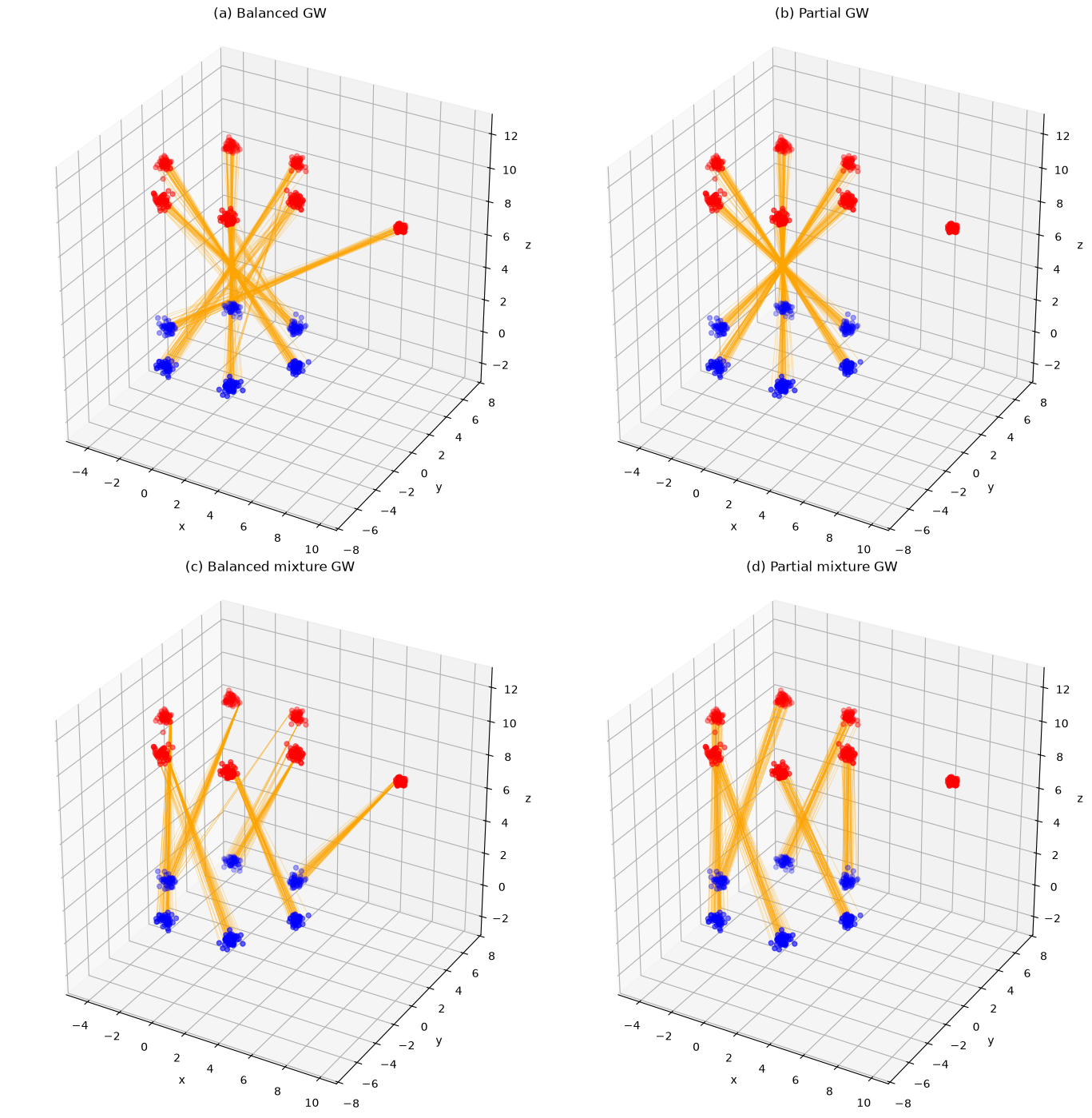}

  \caption{Comparison of balanced and partial GW-type methods on synthetic point clouds with outliers.}
  \label{fig:gw-comparison}
\end{figure}

\begin{figure}[htbp]
  \centering

  \includegraphics[width=\textwidth]{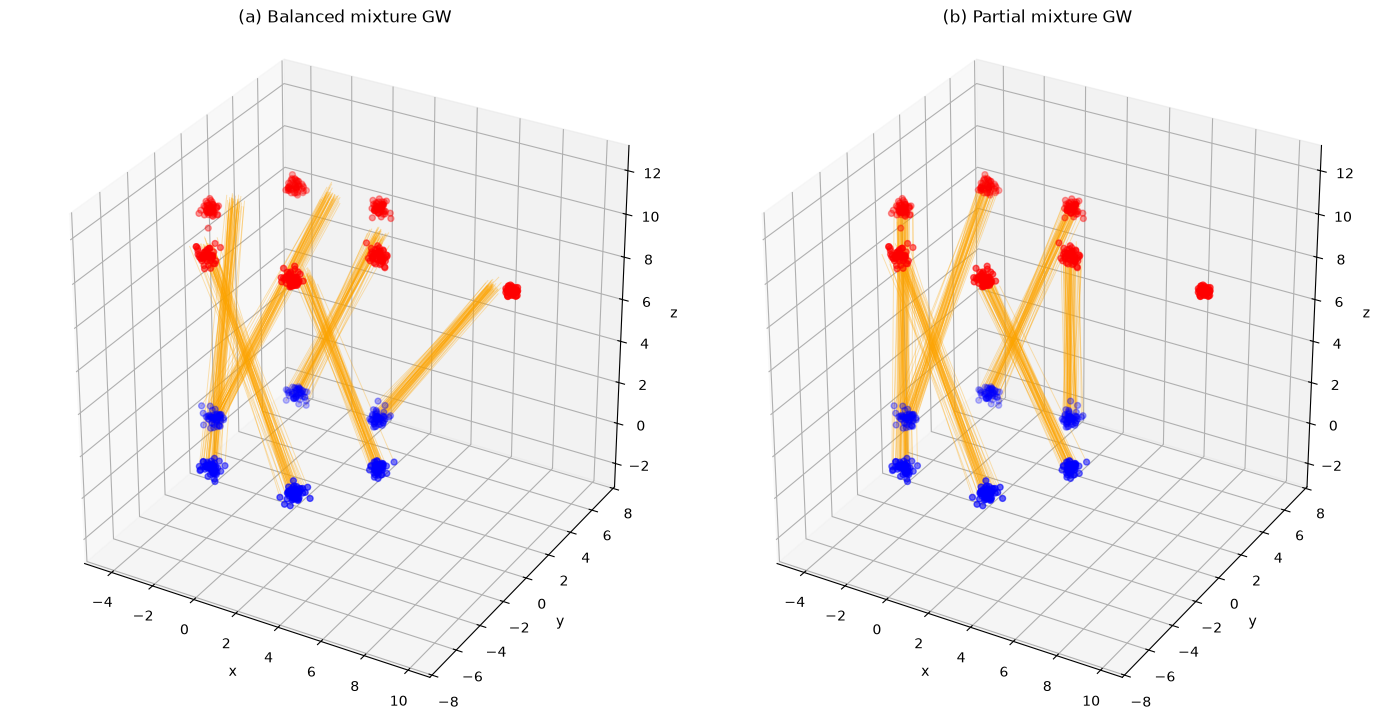}

  \caption{Barycentric projection maps obtained from balanced and partial
mixture GW before nearest-point assignment.}
  \label{fig:gw-bary-comparison}
\end{figure}

The component representation also substantially reduces the computational cost of the coupling problem. Indeed, the point-level coupling between the two empirical measures is replaced by a coupling between a much smaller number of Gaussian components. This reduction considerably decreases the size of the optimization problem and provides a substantial computational advantage over the point-level formulation.

\section{Conclusions}\label{sec:conclusions}
We developed the finite-dimensional entropic partial optimal transport problem \eqref{eq:epot} and applied it to Gaussian mixture models. For fixed $\eps > 0$, we proved existence and uniqueness of the minimizer, obtained the quantitative unmatched-mass estimate \eqref{eq:mass-rate}, and established the value estimate \eqref{eq:value-error-estimate}. In particular, the EPOT value converges to $\OT_{\eps}(a,b)-\eps$ as stated in \eqref{eq:value-convergence}, and the real--real block of the minimizer converges to the balanced entropic coupling by Corollary~\ref{cor:plan-convergence}.

For Gaussian mixtures, the EPOT component coupling induces the continuous partial transport plan \eqref{eq:gmm-induced-plan}. We proved its narrow convergence in \eqref{eq:gmm-narrow-convergence} and characterized the sequential limits $\lambda \to \infty$ and $\eps \to 0$ by \eqref{eq:gmm-double-limit}. The same componentwise construction yields the entropic partial displacement interpolation \eqref{eq:partial-displacement-interpolation}, whose convergence to the balanced and maximum-entropy interpolations is established in Proposition~\ref{prop:displacement-interpolation-convergence}.

We also defined the partial mixture Gromov--Wasserstein distance by \eqref{eq:pmgw-definition}, with the finite formulation \eqref{eq:pmgw-discrete}. It admits minimizers, defines a metric on the associated component metric measure spaces modulo strong isomorphism, and coincides with the balanced mixture GW distance for sufficiently large $\lambda$. We further constructed pointwise assignments through the partial barycentric projection maps \eqref{eq:epot-barycentric-map} and \eqref{eq:pmgw-barycentric-map}. For the entropic partial barycentric projection map, the large-penalty convergence is established in \eqref{eq:epot-barycentric-L2-convergence}.

The numerical experiments demonstrate that the penalty parameter $\lambda>0$ controls the amount of matched mass, whereas the regularization parameter $\eps>0$ controls the concentration of the component coupling. Partial mixture GW preserves the common geometric structure in the presence of outliers, while the component-level GW formulation substantially reduces the computational burden. Future work includes applications to high-dimensional geometric and biological data.

\section*{Acknowledgments}
The authors are grateful to Dr.~Yuki Kimura (deltaex) for his assistance in organizing the source code used in the numerical experiments. The authors used ChatGPT (OpenAI) to improve the wording, grammar, and readability of portions of the manuscript. The authors assume responsibility for all content.

\section*{Code availability statement}
The source code used to generate the data and reproduce the numerical experiments in this paper is publicly available at \url{https://github.com/yachimura-lab/EPGOT-PMGW}.

\section*{Funding}
The first author is partially supported by JST PRESTO (JPMJPR24KD).

\section*{Conflict of interest}
The authors declare that they have no conflicts of interest.

\bibliographystyle{siam}
\bibliography{references}

\bigskip
\noindent
\textsc{Toshiaki Yachimura,\\
Mathematical Science Center for Co-creative Society, Tohoku University,\\
Sendai 980-0845, Japan}\\
\noindent
\emph{Electronic mail address:} toshiaki.yachimura.a4@tohoku.ac.jp

\bigskip

\noindent
\textsc{Xiaocheng Zou,\\
Mathematical Institute, Tohoku University,\\
Sendai 980-8578, Japan}\\
\noindent
\emph{Electronic mail address:} zou.xiaocheng.t3@dc.tohoku.ac.jp

\end{document}